\theoremstyle{theorem}
\newtheorem{theorem}{Theorem}[section]
\newtheorem{lemma}{Lemma}[section]
\newtheorem{proposition}{Proposition}[section]
\newtheorem{corollary}{Corollary}[section]
\newtheorem{remark}{Remark}[section]
\theoremstyle{definition}
\newtheorem{definition}{Definition}[section]
\newtheorem{example}{Example}[section]
\theoremstyle{remark}
\newcommand{\be}{\begin{equation}}
\newcommand{\ee}{\end{equation}}
\newcommand{\bthe}{\begin{theorem}}
\newcommand{\ethe}{\end{theorem}}
\newcommand{\blem}{\begin{lemma}}
\newcommand{\elem}{\end{lemma}}
\newcommand{\bcor}{\begin{corollary}}
\newcommand{\ecor}{\end{corollary}}
\newcommand{\bdefi}{\begin{definition}}
\newcommand{\edefi}{\end{definition}}
\newcommand{\bp}{\begin{proof}}
\newcommand{\ep}{\end{proof}}
\begin{document}

\title{\textbf{Essential Corrigenda to "Generalized approximation spaces generation from $\mathbb{I}_{j}$-neighborhoods and
ideals with application to Chikungunya disease"}}
\author{Rodyna A. Hosny$^{(1)}$, Naglaa M. Madbouly$^{(2)}$, Mostafa K. El-Bably$^{(3, 4)}$ \\
$^{(1)}${\footnotesize {Department of Mathematics, Faculty of Science,
Zagazig University, Zagazig 44519, Egypt}}\\
$^{(2)}${\footnotesize{Mathematics Department, Faculty of Science, Helwan University, Helwan 11795, Egypt}}\\
$^{(3)}${\footnotesize {Department of Mathematics, Faculty of Science, Tanta
University, Tanta 31527, Egypt}}\\
$^{(4)}${\footnotesize{Jadara University Research Center, Jadara University, Irbid 21110 , Jordan}}}
\date{}
\maketitle

\begin{abstract}
In the work "Generalized approximation spaces generation from $\mathbb{I}_{j}$-neighborhoods and ideals with application to Chikungunya disease, published in \emph{AIMS Mathematics},  \textbf{9}(4) (2024), 10050$-$10077," Al-Shami and Hosny introduced a novel approach for generating generalized neighborhoods through $\mathbb{I}_{j}$-neighborhoods with ideals, subsequently deriving new methods for generalized approximations based on these neighborhoods. However, several errors have been identified in their results, concepts, and methods, along with significant inaccuracies in the provided examples and comparison tables. This paper aims to address and correct these errors, providing counterexamples to illustrate the inaccuracies in the proposed results.  Furthermore, we correct several flawed proofs and present the revised formulations of these results and concepts. Additionally, we offer properties and clarifications to enhance the understanding of these concepts.
\color{black}
\end{abstract}

\textbf{ Keywords:} $\mathbb{I}_{j}$-neighborhoods; $\mathbb{I}^{\mathcal{K}}_{j}$-neighborhoods; approximation operators; rough set; topology.\\

\textbf{Mathematics Subject Classification:} 03E72, 54A05, 68T30, 91B06\\

\section{Introduction}

\begin{itemize}
  \item \textbf{Literature Review:}
\end{itemize}

The theory of rough sets is a significant and effective methodology for handling uncertainty and ambiguity in data, as well as for analyzing and extracting knowledge from it. Introduced by Pawlak in 1982 \cite{Pawlak 1982}, this theory has served as a foundational framework for effective solutions to decision-making problems. Despite its strengths, the constraints of equivalence relation conditions central to this approach have led many researchers to propose generalizations and extensions to broaden its applications.

Several avenues have been explored in this regard. For instance, Yao \cite{Yao 1998} extended the concept of an equivalence relation to a more general binary relation, introducing  the notions of right and left neighborhoods derived from the concepts of after- and before- sets, which were defined in \cite{DeBaets1993}. Similarly, Allam et al. \cite{Allam2005, Allam2006} proposed minimal right and minimal left neighborhoods to generate new approximations that generalize Pawlak's rough set theory.

In 2007, Abo Khadra et al. \cite{khadra2007} introduced a technique to generalize Pawlak's approximation space through the use of binary relations. This method constructs dual topologies directly from right and left neighborhoods, without modifying the data within the information system. It asserts that the class
\[
\mathcal{T} = \{\mathcal{A} \subseteq U : \mho(s) \subseteq \mathcal{A}, \ \forall s \in \mathcal{A} \}
\]
forms a topology on \( U \), where \( U \) represents the universe and \( \mho(s) \) is the neighborhood of \( s \). By defining these neighborhoods, various topologies on \( U \) can be generated.

This technique was expanded in El-Bably's 2008 Master’s thesis \cite{El-BablyTh2008}, which examined additional topological structures and rough set theory generalizations. Later, Abd El-Monsef et al. \cite{Abd El-Monsef 2014} extended the work of \cite{khadra2007, El-BablyTh2008} by introducing a new space called $j$-neighborhood space ($j$-$\textbf{NS}$), constructed by defining union and intersection neighborhoods from right and minimal right neighborhoods. This development led to eight types of neighborhoods derived from binary relations, which have been widely used in recent studies to define new neighborhoods based on these concepts (e.g., \cite{Kaur 2024,Al-shami 2021b}), as well as maximal neighborhoods \cite{Al-shami 2021,R. A. Hosny 2021a,Hosny 2024} and initial neighborhoods \cite{RAHosny 2024}.

Yao \cite{Yao 1998} presented a special class of neighborhood systems, termed 1-neighborhood systems, which combine right and left neighborhoods without studying topological structures. Kozae et al. \cite{Kozae 2010} further proposed $<x>R<x>$ as the intersection of minimal right and left neighborhoods to define rough sets, building on the works in \cite{Allam2005, Allam2006}.

In a subsequent study, Atef et al. \cite{Atef 2020} introduced the $j$-adhesion neighborhood space, highlighting the potential of $j$-$\textbf{NS}$ to extend concepts related to adhesion sets. Later, El-Bably et al. \cite{El-Bably 2021} addressed inconsistencies in the findings of Atef et al., providing corrected results and additional insights into $j$-adhesion neighborhoods. Their approach is based on the concept of "adhesion sets" proposed by Ma in 2012 \cite{Ma 2012}, which is relevant to the theory of covering rough sets.
Additionally, Nawar et al. \cite{Nawar 2021} introduced $j$-adhesion neighborhoods in covering-based rough sets, based on the generalized covering approximation space of Abd El-Monsef et al. \cite{Abd El-Monsef 2015}. Core neighborhood ideas from \cite{Hung 2008,Xiaole 2013} were also extended in \cite{Mareay 2016} using techniques from \cite{khadra2007,Abd El-Monsef 2014}. Furthermore, El-Bably and Al-Shami \cite{El-Bably 2021} introduced core minimal neighborhoods through binary relations, extending Pawlak rough sets to generalized types for applications, such as in lung cancer research. \\

On the other hand, the application of neighborhood has expanded in new directions, such as advanced neighborhood-based methods used for feature selection. For example, studies such as \cite{Zhang 2024, Zhang 2022, Liu 2022, Xiao 2022} have employed these methods for feature selection.\\

\begin{itemize}
  \item \textbf{Motivations and Objectives:}
\end{itemize}

Recently, Al-shami and Hosny \cite{Al-shami 2024} defined a new type of neighborhood based on the $\mathbb{I}_{j}$-neighborhood concept \cite{Al-shami 2021}. Utilizing Abd El-Monsef et al.'s \cite{Abd El-Monsef 2014} definitions of intersection and union neighborhoods, they generated eight novel neighborhoods, studied their properties and relationships, and provided theoretical results supported by illustrative examples. They also developed various rough set approximations based on $\mathbb{I}^{\mathcal{K}}_{j}$-neighborhoods, demonstrating their effectiveness through a medical case study.

Our paper seeks to address several errors, inconsistencies, and inaccuracies identified in the work of Al-shami and Hosny \cite{Al-shami 2024}. The primary objectives of our study are as follows:
\begin{enumerate}
  \item Identify and highlight the errors in Al-shami and Hosny’s results such as Theorem 3.4, Proposition 4.3, and Proposition 5.4 from \cite{Al-shami 2024}.
  \item Provide counterexamples to demonstrate the inaccuracies in their findings, for instance Examples \ref{e1}, \ref{e18}, \ref{e4}, \ref{e5}, \ref{e2} and \ref{e3}.
  \item Correct erroneous results, definitions, and properties introduced in their research.
  \item Rectify incorrect examples and proofs provided in their paper.
  \item Enhance research understanding by providing corrected results, expanding its potential applications.
  \item Contribute to refining rough set theory and methodologies using corrected neighborhood concepts.
\end{enumerate}

\begin{itemize}
  \item \textbf{Organization of the Manuscript:}
\end{itemize}

The remainder of this paper is organized as follows: Section 2 presents essential concepts and foundational  results needed for understanding the manuscript, including corrections to inaccurate definitions, results, and citations from \cite{Al-shami 2024} for coherence. Section 3 identifies and analyzes the errors found in \cite{Al-shami 2024}, specifically addressing inaccuracies in results, definitions, and examples, along with the corresponding corrections. Finally, Section 4 concludes with a summary and discussion of our findings.

\section{Preliminaries}

This foundational section presents essential concepts and core results necessary for understanding both the study and the manuscript’s content. The primary aim here is to correct inaccuracies in definitions and results, as well as to address erroneous citations of key concepts from \cite{Al-shami 2024}, ensuring continuity and coherence throughout the study.\\

First, it is well established that a binary relation $R$ on a nonempty set (universe) $U$ is a subset of $U \times U$. Henceforth, we assume $U$ to be a nonempty finite set, with $R$ representing an arbitrary relation unless stated otherwise. For $s, t \in U$, we write $sRt$ if $(s, t) \in R$. Notably, the following concepts were inaccurately presented in \cite{Al-shami 2024}; we therefore provide the corrected definitions below:
\color{black}

\begin{definition} \label{d1} \cite{Yao 1998,DeBaets1993} A relation $R$ on $U$ is said to be:
\begin{enumerate}
\item serial, if for each $s \in U$ there exists $t \in U$ such that $sRt$.
\item reflexive, if $sRs$, for all $s \in U$.
\item symmetric, if $sRt$ $\Leftrightarrow$ $tRs$.
\item transitive, if $sRt$ whenever $pRt$ and $sRp$.
\item preorder (or quasi-order), if it is reflexive and transitive.
\item equivalence, if it is reflexive, symmetric, and transitive.
\end{enumerate}
\end{definition}


In \cite{Al-shami 2024}, the authors presented Definition 2.2 but omitted the concepts of intersection  (and minimal intersection) neighborhoods and  union  (and minimal union) neighborhoods, which are essential for their definitions and results. To address this, we provide the necessary definitions below.

\begin{definition} \label{d2} The following $\omega$-neighborhoods of an element $s \in U$, inspired by the relation $R$, are
defined as follows:
\begin{enumerate}
\item  after neighborhood (or right neighborhood) of a point $s$, as defined in \cite{Yao 1998,DeBaets1993}, is denoted by $\omega_{a}(s)$ and is given by $\omega_{a}(s) = \{t \in U : sRt\}$.
\item before neighborhood (or left neighborhood) of a point $s$, as defined in \cite{Yao 1998,DeBaets1993}, is denoted by $\omega_{b}(s)$ and is given by $\omega_{b}(s) = \{t \in U : tRs\}$.
\item  minimal-after neighborhood of $s$, as established in \cite{Allam2005}, is denoted by $\omega_{<a>}(s)$ and  is  defined as the intersection of all after neighborhoods contain $s$.
\item  minimal-before neighborhood of $s$, as established in \cite{Allam2005, Allam2006}, is denoted by $\omega_{<b>}(s)$  and  is  defined as  the intersection of all before neighborhoods contain $s$.
\item  intersection neighborhood of a point $s$, described in \cite{Yao 1998,Abd El-Monsef 2014}, is denoted by $\omega_{i}(s)$ and is defined as $\omega_{i}(s) =$ $\omega_{a}(s)$ $\bigcap$ $\omega_{b}(s)$.
\item union neighborhood of a point $s$, as defined in \cite{Yao 1998,Abd El-Monsef 2014}, is denoted by $\omega_{u}(s)$ and is expressed as $\omega_{u}(s) = $ $\omega_{a}(s)$ $\bigcup$ $\omega_{b}(s)$.
\item  minimal-intersection neighborhood of a point $s$, as defined in \cite{Abd El-Monsef 2014,Kozae 2010}, is denoted by $\omega_{<i>}(s)$ and is expressed as $\omega_{<i>}(s) =$ $\omega_{<a>}(s)$ $\bigcap$ $\omega_{<b>}(s)$.
\item  minimal-union neighborhood of a point $s$, referenced in \cite{Abd El-Monsef 2014}, is denoted by $\omega_{<u>}(s)$ and is expressed as  $\omega_{<u>}(s) = $ $\omega_{<a>}(s)$ $\bigcup$ $\omega_{<b>}(s)$. \newline

For simplicity, the set $\{a, b, <a>, <b>, i, u, <i>, <u>\}$ will be denoted by $\Omega$.

\end{enumerate}
\end{definition}

\begin{definition} \label{d5}  \cite{Yao 1998, Allam2005,Kozae 2010} For the $\omega$-neighborhoods and for each $j \in \Omega$, the approximation operators (both lower and upper), boundary region, and measures of accuracy for a nonempty subset $F$ of $U$ are respectively given by:

\begin{center}
$R_{\star}^{\omega_j}(F) = \{s \in U : \omega_{j}(s) \subseteq F\}$.\\
$R^{\star\omega_j}(F) = \{s \in U : \omega_{j}(s) \cap F \neq\emptyset\}$.\\
$BND_{R}^{\star\omega_j}(F) = R^{\star\omega_j}(F) \setminus R_{\star}^{\omega_j}(F)$.\\
$ACC_{R}^{\star\omega_j}(F) = \frac{\mid R_{\star}^{\omega_j}(F)\cap F\mid}{\mid R^{\star\omega_j}(F)\cup F\mid}$, where $\mid R^{\star\omega_j}(F)\cup F\mid\neq0$.
\end{center}
\end{definition}
\color{black}

\begin{definition} \label{d4}  \cite{Atef 2020,El-Bably 2021, Nawar 2021,Hung 2008,Xiaole 2013,Mareay 2016} The following $\rho$-neighborhoods of an element $s \in U$, inspired from a relation $R$, are defined as follows:
\begin{enumerate}
\item  $\rho_{a}(s) = \{t \in U : \omega_{a}(t) = \omega_{a}(s)\}$.
\item  $\rho_{b}(s) = \{t \in U : \omega_{b}(t) = \omega_{b}(s)\}$.
\item  $\rho_{i}(s) = \rho_{a}(s) \cap \rho_{b}(s)$.
\item  $\rho_{u}(s) = \rho_{a}(s) \cup \rho_{b}(s)$.
\item  $\rho_{<a>}(s) = \{t \in U : \omega_{<a>}(t) = \omega_{<a>}(s)\}$.
\item  $\rho_{<b>}(s) = \{t \in U : \omega_{<b>}(t) = \omega_{<b>}(s)\}$.
\item  $\rho_{<i>}(s) = \rho_{<a>}(s) \cap \rho_{<b>}(s)$.
\item  $\rho_{<u>}(s) = \rho_{<a>}(s) \cup \rho_{<b>}(s)$.
\end{enumerate}
\end{definition}

\begin{definition} \label{d9} \cite{Atef 2020, El-Bably 2021, Nawar 2021} For $\rho$-neighborhoods and for each $j \in \Omega$, the approximation operators (both lower and upper), boundary region, and measures of accuracy for a nonempty subset $F$ of $U$ are respectively defined as follows:

\begin{center}
$R_{\star}^{\rho_j}(F) = \{s \in U : \rho_{j}(s) \subseteq F\}$.\\
$R^{\star\rho_j}(F) = \{s \in U : \rho_{j}(s) \cap F \neq\emptyset\}$.\\
$BND_{R}^{\star\rho_j}(F) = R^{\star\rho_j}(F) \setminus R_{\star}^{\rho_j}(F)$.\\
$ACC_{R}^{\star\rho_j}(F) = \frac{\mid R_{\star}^{\rho_j}(F)\mid}{\mid R^{\star\rho_j}(F)\mid}$, where $\mid R^{\star\rho_j}(F)\mid\neq0$.
\color{black}
\end{center}
\end{definition}

\begin{definition} \label{d3} \cite{Al-shami 2021} The subsequent $\mathbb{I}$-neighborhoods of an element $s \in U$, deduced from a relation $R$, are specified as follows:
\begin{enumerate}
\item $\mathbb{I}_{a}(s) = \{t \in U : \omega_{a}(t) \cap \omega_{a}(s) \neq\emptyset\}$.
\item  $\mathbb{I}_{b}(s) = \{t \in U : \omega_{b}(t) \cap \omega_{b}(s) \neq\emptyset\}$.
\item  $\mathbb{I}_{i}(s) = \mathbb{I}_{a}(s) \cap \mathbb{I}_{b}(s)$.
\item  $\mathbb{I}_{u}(s) = \mathbb{I}_{a}(s) \cup \mathbb{I}_{b}(s)$.
\item  $\mathbb{I}_{<a>}(s) = \{t \in U : \omega_{<a>}(t) \cap \omega_{<a>}(s) \neq\emptyset\}$.
\item  $\mathbb{I}_{<b>}(s) = \{t \in U : \omega_{<b>}(t) \cap \omega_{<b>}(s) \neq\emptyset\}$.
\item  $\mathbb{I}_{<i>}(s) = \mathbb{I}_{<a>}(s) \cap \mathbb{I}_{<b>}(s)$.
\item  $\mathbb{I}_{<u>}(s) = \mathbb{I}_{<a>}(s) \cup \mathbb{I}_{<b>}(s)$.
\end{enumerate}
\end{definition}

In \cite{Al-shami 2021}, $\mathbb{I}_{j}$-neighborhoods were studied under the name “$E_{j}$-neighborhoods”, $j \in \Omega$.

\begin{definition} \label{d6} \cite{Al-shami 2021} For $\mathbb{I}$-neighborhoods and for each $j \in \Omega$, the approximation operators (both lower and upper), boundary region, and measures of accuracy of a nonempty subset $F$ of $U$ are defined as follows:

\begin{center}
$R_{\star}^{\mathbb{I}_j}(F) = \{s \in U : \mathbb{I}_{j}(s) \subseteq F\}$.\\
$R^{\star\mathbb{I}_j}(F) = \{s \in U : \mathbb{I}_{j}(s) \cap F \neq\emptyset\}$.\\
$BND_{R}^{\star\mathbb{I}_j}(F) = R^{\star\mathbb{I}_j}(F) \setminus R_{\star}^{\mathbb{I}_j}(F)$.\\
$ACC_{R}^{\star\mathbb{I}_j}(F) = \frac{\mid R_{\star}^{\mathbb{I}_j}(F)\cap F\mid}{\mid R^{\star\mathbb{I}_j}(F)\cup F\mid}$, where $\mid R^{\star\mathbb{I}_j}(F)\cup F\mid\neq0$.
\color{black}
\end{center}
\end{definition}

\begin{definition} \label{d7} \cite{Kuratowski 1966} An ideal $\mathcal{K}$ on a nonempty set $U$ is a nonempty collection of subsets of $U$ that is closed under finite unions and contains all subsets of its elements.
\end{definition}

\begin{definition} \label{d8}  \cite{R. A. Hosny 2021a} Let $R$ and $\mathcal{K}$ represent a binary relation and an ideal, respectively, on a nonempty set $U$. The approximation operators (lower and upper), boundary region, and accuracy of a nonempty subset $F$ of $U$, derived from $R$ and $\mathcal{K}$ using $\mathbb{I}$-neighborhoods, are defined as follows:

\begin{center}
$L_{\star}^{\mathbb{I}_j}(F) = \{s \in U : \mathbb{I}_{j}(s) \setminus F \in \mathcal{K}\}$.\\
$U^{\star\mathbb{I}_j}(F) = \{s \in U : \mathbb{I}_{j}(s) \cap F \not\in \mathcal{K}\}$.\\
$\bigtriangleup_{R}^{\star\mathbb{I}_j}(F) = U^{\star\mathbb{I}_j}(F) \setminus L_{\star}^{\mathbb{I}_j}(F)$.\\
$\mathcal{M}_{R}^{\star\mathbb{I}_j^\mathcal{K}}(F) = \frac{\mid L_{\star}^{\mathbb{I}_j}(F)\cap F\mid}{\mid U^{\star\mathbb{I}_j}(F)\cup F\mid}$, where $\mid U^{\star\mathbb{I}_j}(F)\cup F\mid\neq0$.\\
\color{black}
\end{center}
\end{definition}

\begin{definition} \label{d12} \cite{Al-shami 2024} Let $R$ be a relation on $U$, and let $\mathcal{K}$ be an ideal on $U$. The $\mathbb{I}^{\mathcal{K}}_{j}$-neighborhoods of an element $s \in U$ are specified as follows:
\begin{enumerate}
\item $\mathbb{I}^{\mathcal{K}}_{a}(s) = \{t \in U : \omega_{a}(t) \cap \omega_{a}(s) \not\in\mathcal{K}\}$.
\item  $\mathbb{I}^{\mathcal{K}}_{b}(s) = \{t \in U : \omega_{b}(t) \cap \omega_{b}(s) \not\in\mathcal{K}\}$.
\item  $\mathbb{I}^{\mathcal{K}}_{i}(s) = \mathbb{I}^{\mathcal{K}}_{a}(s) \cap \mathbb{I}^{\mathcal{K}}_{b}(s)$.
\item  $\mathbb{I}^{\mathcal{K}}_{u}(s) = \mathbb{I}^{\mathcal{K}}_{a}(s) \cup \mathbb{I}^{\mathcal{K}}_{b}(s)$.
\item  $\mathbb{I}^{\mathcal{K}}_{<a>}(s) = \{t \in U : \omega_{<a>}(t) \cap \omega_{<a>}(s) \not\in\mathcal{K}\}$.
\item  $\mathbb{I}^{\mathcal{K}}_{<b>}(s) = \{t \in U : \omega_{<b>}(t) \cap \omega_{<b>}(s) \not\in\mathcal{K}\}$.
\item  $\mathbb{I}^{\mathcal{K}}_{<i>}(s) = \mathbb{I}^{\mathcal{K}}_{<a>}(s) \cap \mathbb{I}^{\mathcal{K}}_{<b>}(s)$.
\item  $\mathbb{I}^{\mathcal{K}}_{<u>}(s) = \mathbb{I}^{\mathcal{K}}_{<a>}(s) \cup \mathbb{I}^{\mathcal{K}}_{<b>}(s)$.
\end{enumerate}
\end{definition}

Note that if $\mathcal{K} = \{\emptyset\}$, then Definition \ref{d12} is equivalent to Definition \ref{d3}. Consequently, the work presented in \cite{Al-shami 2024} can be regarded as a genuine extension of the results in  \cite{Al-shami 2021}. \newline

The notion of the "$\mathbb{I}$-Generalized approximation space" (abbreviated as $\mathbb{I}$-$G$ approximation space) was not defined in \cite{Al-shami 2024}, although it is referenced throughout the definitions and results. Therefore, we identified it as follows:


\begin{definition} \label{d111} Let $R$ be a relation on $U$ and $\mathcal{K}$ be an ideal on $U$. Then, the triple $(U, R, \mathcal{K})$ referred to as an $\mathbb{I}$-$G$ approximation space.
\end{definition}

\begin{theorem}\label{t104} \cite{Al-shami 2024} Let $(U, R, \mathcal{K})$ be an $\mathbb{I}$-$G$ approximation space with $s\in U$.
 If $R$ is a reflexive relation, then $\mathbb{I}^{\mathcal{K}}_{<j>}(s) \subseteq \mathbb{I}^{\mathcal{K}}_{j}(s)$, for each $j \in \{a, b, i, u\}$.
\end{theorem}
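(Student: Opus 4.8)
The plan is to reduce the whole statement to a single inclusion between ordinary and minimal neighborhoods, transport it through the ideal via downward closure, and then dispose of the composite indices $i, u$ purely formally at the end.

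First I would record the key consequence of reflexivity: for every $s \in U$ one has $\omega_{<a>}(s) \subseteq \omega_{a}(s)$ and $\omega_{<b>}(s) \subseteq \omega_{b}(s)$. For the after case, $\omega_{<a>}(s)$ is by Definition \ref{d2} the intersection of all after neighborhoods $\omega_{a}(p)$ that contain $s$, and reflexivity gives $sRs$, hence $s \in \omega_{a}(s)$; thus $\omega_{a}(s)$ is itself one of the sets being intersected, and an intersection is contained in each of its members. The before case is identical. This is the only place the reflexivity hypothesis is used.

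Next I would treat the base indices $j \in \{a, b\}$. Taking $t \in \mathbb{I}^{\mathcal{K}}_{<a>}(s)$, i.e. $\omega_{<a>}(t) \cap \omega_{<a>}(s) \notin \mathcal{K}$, the two inclusions above (applied at $t$ and at $s$) give $\omega_{<a>}(t) \cap \omega_{<a>}(s) \subseteq \omega_{a}(t) \cap \omega_{a}(s)$. I would then invoke the defining property of an ideal from Definition \ref{d7} — closure under subsets — in its contrapositive form: any set containing a non-member of $\mathcal{K}$ cannot itself lie in $\mathcal{K}$. Hence $\omega_{a}(t) \cap \omega_{a}(s) \notin \mathcal{K}$, so $t \in \mathbb{I}^{\mathcal{K}}_{a}(s)$, yielding $\mathbb{I}^{\mathcal{K}}_{<a>}(s) \subseteq \mathbb{I}^{\mathcal{K}}_{a}(s)$; the case $j = b$ is symmetric.

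Finally, the indices $i$ and $u$ require no new idea. Using $\mathbb{I}^{\mathcal{K}}_{<i>}(s) = \mathbb{I}^{\mathcal{K}}_{<a>}(s) \cap \mathbb{I}^{\mathcal{K}}_{<b>}(s)$ together with the bracket-free identity $\mathbb{I}^{\mathcal{K}}_{i}(s) = \mathbb{I}^{\mathcal{K}}_{a}(s) \cap \mathbb{I}^{\mathcal{K}}_{b}(s)$, monotonicity of intersection applied to the two base inclusions settles $j = i$; monotonicity of union does the same for $j = u$. The step I expect to demand the most care — and the plausible origin of the error being corrected — is the direction of the ideal implication: one must pass from a non-member to its supersets, never the reverse. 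Using downward closure the wrong way would flip the claimed inclusion, and it is precisely reflexivity that orients the underlying neighborhood inclusion so that the whole argument closes; without reflexivity the containment need not hold, which is presumably why the statement is restricted to $j \in \{a, b, i, u\}$ under this hypothesis.
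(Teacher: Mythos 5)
Your proposal is correct, and every step is properly justified: reflexivity gives $s \in \omega_{a}(s)$, hence $\omega_{<j>}(s) \subseteq \omega_{j}(s)$; intersecting these inclusions at $s$ and $t$ and applying the hereditary property of $\mathcal{K}$ in contrapositive form (a superset of a non-member of $\mathcal{K}$ is a non-member) settles $j \in \{a,b\}$; and monotonicity of $\cap$ and $\cup$ handles $j \in \{i,u\}$. Note, however, that this paper contains no proof of Theorem \ref{t104} to compare against: the statement is quoted from the work of Al-shami and Hosny being corrected, and even the corrected Theorem \ref{t3}(2), which restates the same inclusion, explicitly defers its proof to that reference — so your argument stands on its own as the natural (and, as far as this paper is concerned, the only available) complete proof of the claim.
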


\begin{definition} \label{d13} \cite{Al-shami 2024} Let $R$  and $\mathcal{K}$ denote a binary relation and an ideal on a nonempty set $U$, respectively. The improved operators (lower and upper), boundary region, and accuracy of a nonempty subset $F$ of $U$ derived from $R$ and $\mathcal{K}$ are defined as follows:
\begin{center}
$R_{\star}^{\mathbb{I}_{j}^{\mathcal{K}}}(F) = \{s \in U : \mathbb{I}^{\mathcal{K}}_{j}(s) \setminus F \in \mathcal{K}\}$.\\
$R^{\star\mathbb{I}_{j}^{\mathcal{K}}}(F) = \{s \in U : \mathbb{I}^{\mathcal{K}}_{j}(s)\cap F \not\in \mathcal{K}\}$.\\
$BND_{R}^{\star\mathbb{I}_{j}^{\mathcal{K}}}(F) = R^{\star\mathbb{I}_{j}^{\mathcal{K}}}(F) \setminus R_{\star}^{\mathbb{I}_{j}^{\mathcal{K}}}(F)$.\\
$ACC_{R}^{\star\mathbb{I}_{j}^{\mathcal{K}}}(F) = \frac{\mid R_{\star}^{\mathbb{I}_{j}^{\mathcal{K}}}(F)\cap F\mid}{\mid R^{\star\mathbb{I}_{j}^{\mathcal{K}}}(F)\cup F\mid}$, where $\mid R^{\star\mathbb{I}_{j}^{\mathcal{K}}}(F)\cup F\mid\neq0$.\\
\end{center}
\end{definition}

\begin{remark} \label{emptycondition}
\begin{enumerate}
\item According to Pawlak's rough set theory principles, as described in \cite{Pawlak 1982}, if the lower approximation of the empty set $\emptyset$ is equal to its upper approximation, which is also $\emptyset$, then $\emptyset$ is a definable (exact) set, and thus, the accuracy measure of the empty set is 1. If this condition is not met, then $\emptyset$ is an undefinable (rough) set, and its accuracy measure is not equal to 1. Additionally, in the definition of the accuracy measure, there is a condition that the upper approximation must not be equal to $\emptyset$ to avoid division by zero. Therefore, if the upper approximation is equal to $\emptyset$, the accuracy measure in this case is considered an indefinite quantity.
\item Accordingly, in Definition \ref{d13}, $\emptyset$ is a definable (exact) set if $R_{\star}^{\mathbb{I}_{j}^{\mathcal{K}}}(\emptyset) = R^{\star\mathbb{I}_{j}^{\mathcal{K}}}(\emptyset) = \emptyset$, which implies that $ACC_{R}^{\star\mathbb{I}_{j}^{\mathcal{K}}}(\emptyset) = 1$. Otherwise, $\emptyset$ is an undefinable (rough) set, and therefore, $ACC_{R}^{\star\mathbb{I}_{j}^{\mathcal{K}}}(\emptyset)$ is considered an indefinite quantity.
\end{enumerate}
\end{remark}

Theorem \ref{t1} presents a significant result for generating a general topology from any given neighborhood, a method initially proposed by Abo Khadra et al. (2007) \cite{khadra2007} and later extended  in (2014) by Abd El-Monsef et al. \cite{Abd El-Monsef 2014}. This approach has subsequently been employed by other researchers to develop various topological structures, as illustrated in Theorem \ref{t1}.
\color{black}
\begin{theorem}\label{t1} Let $U$ be a universe and $j \in \Omega$. Then,

\begin{enumerate}
\item The fundamental method \cite{khadra2007,Abd El-Monsef 2014}: $$\top^{\omega_j} = \{F \subseteq U :  \omega_{j}(s) \subseteq F, \forall s \in F\}$$ is a topology on $U$.
\item Adhesion topologies \cite{Atef 2020, El-Bably 2021, Nawar 2021}: $\top^{\rho_j}= \{F \subseteq U :  \rho_{j}(s) \subseteq F, \forall s \in F\}$ is a topology on $U$.
\item $\mathbb{I}_{j}$-topologies \cite{Al-shami 2021}: $\top^{\mathbb{I}_j} = \{F \subseteq U :  \mathbb{I}_{j}(s) \subseteq F, \forall s \in F\}$ is a topology on $U$.
\end{enumerate}
\end{theorem}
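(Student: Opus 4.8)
The plan is to observe that all three parts are instances of a single generic fact: for \emph{any} assignment $s \mapsto N(s) \subseteq U$ of a ``neighborhood'' to each point, the collection $\top^{N} = \{F \subseteq U : N(s) \subseteq F \text{ for all } s \in F\}$ is a topology on $U$. Since $\omega_{j}$, $\rho_{j}$, and $\mathbb{I}_{j}$ each furnish such an assignment for every $j \in \Omega$, proving the generic statement settles all three parts simultaneously. I would therefore carry out the verification once, with $N$ standing for an arbitrary neighborhood operator, and then note that the argument applies verbatim to each of the three cases.

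To verify the topology axioms I would proceed in three steps. First, the extreme sets: $U \in \top^{N}$ because $N(s) \subseteq U$ holds trivially for every $s \in U$, and $\emptyset \in \top^{N}$ vacuously, since the defining condition ``$N(s) \subseteq \emptyset$ for all $s \in \emptyset$'' is satisfied for want of any $s \in \emptyset$. Second, closure under arbitrary unions: given a family $\{F_{\alpha}\} \subseteq \top^{N}$ and a point $s \in \bigcup_{\alpha} F_{\alpha}$, there is some index $\alpha_{0}$ with $s \in F_{\alpha_{0}}$, whence $N(s) \subseteq F_{\alpha_{0}} \subseteq \bigcup_{\alpha} F_{\alpha}$, so the union lies in $\top^{N}$. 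Third, closure under finite intersections: if $F_{1}, F_{2} \in \top^{N}$ and $s \in F_{1} \cap F_{2}$, then membership of $s$ in each $F_{k}$ gives $N(s) \subseteq F_{k}$, and intersecting yields $N(s) \subseteq F_{1} \cap F_{2}$; an immediate induction extends this to any finite intersection.

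There is no genuine obstacle here: the result holds for a completely arbitrary neighborhood map, so none of the specific properties of $\omega_{j}$, $\rho_{j}$, or $\mathbb{I}_{j}$ (nor any hypothesis on $R$) is actually invoked. The only points meriting care are the vacuous handling of $\emptyset$ and the explicit mention that the union ranges over an arbitrary, possibly infinite, index set while the intersection is restricted to finitely many members, matching the asymmetry in the topology axioms. Having established the generic argument, I would close by instantiating $N = \omega_{j}$, $N = \rho_{j}$, and $N = \mathbb{I}_{j}$ to obtain parts (1), (2), and (3), respectively.
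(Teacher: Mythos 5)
Your proof is correct, and it matches the paper's approach: the paper states this theorem as a cited preliminary (it is the method of Abo Khadra et al.\ described in the introduction, where the class $\mathcal{T} = \{\mathcal{A} \subseteq U : \mho(s) \subseteq \mathcal{A},\ \forall s \in \mathcal{A}\}$ is asserted to be a topology for an arbitrary neighborhood assignment $\mho$), which is exactly your generic operator $N$. Your verification of the axioms is sound; one could even note that the argument gives closure under arbitrary intersections, so these are Alexandrov topologies, but that is not needed for the statement.
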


\begin{definition} \label{d10} \cite{Al-shami 2021}
Let $\top^{\mathbb{I}_j}$ be a topology on $U$ as defined by the theorem above, for all $j \in \Omega$ and any $F\subseteq U$. The interior and closure operators of $F$ in $(U,  \top^{\mathbb{I}_j})$, denoted by  $\underline{\top^{\mathbb{I}_j}}(F)$ and $\overline{\top^{\mathbb{I}_j}}(F)$, are referred to as the $\top^{\mathbb{I}_j}$-lower approximation and $\top^{\mathbb{I}_j}$-upper approximation, respectively.
\end{definition}

\begin{definition} \label{d11} \cite{Al-shami 2021} The $\top^{\mathbb{I}_j}$-boundary and $\top^{\mathbb{I}_j}$-accuracy induced by a topological space $(U, \top^{\mathbb{I}_j})$ are  given by $BND^{\top^{\mathbb{I}_j}}(F) =  \overline{\top^{\mathbb{I}_j}}(F) \setminus  \underline{\top^{\mathbb{I}_j}}(F)$ and $ACC^{\top^{\mathbb{I}_j}}(F) = \frac{|\underline{\top^{\mathbb{I}_j}}(F)|}{|\overline{\top^{\mathbb{I}_j}}(F)|}$, where $|\overline{\top^{\mathbb{I}_j}}(F)|\neq0$.
\end{definition}
\color{black}

\begin{theorem}\label{t103} \cite{Al-shami 2024} Let $(U, R, \mathcal{K})$ be an $\mathbb{I}$-$G$ approximation space and $s\in U$. Then, for all $j \in \Omega$, the collection $\tau^{{\mathbb{I}_{j}^{\mathcal{K}}}} = \{F \subseteq U :  {\mathbb{I}_{j}^{\mathcal{K}}}(s) \setminus F \in \mathcal{K}, \forall s \in F\}$ forms a topology on $U$.
\end{theorem}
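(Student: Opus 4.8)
The plan is to verify the three topology axioms directly, drawing on the two defining closure properties of the ideal $\mathcal{K}$ from Definition \ref{d7}: that $\mathcal{K}$ is hereditary (contains all subsets of its members) and closed under finite unions. I would first record a preliminary observation that $\emptyset \in \mathcal{K}$: since an ideal is by definition nonempty, it contains some set $A$, and heredity applied to $\emptyset \subseteq A$ gives $\emptyset \in \mathcal{K}$. This small fact is what makes both $\emptyset$ and $U$ belong to $\tau^{\mathbb{I}_j^{\mathcal{K}}}$.

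For the trivial members, $\emptyset \in \tau^{\mathbb{I}_j^{\mathcal{K}}}$ holds vacuously, since the membership condition quantifies over $s \in F$ and there is no such $s$ when $F = \emptyset$. For $U$, I would note that for every $s \in U$ we have $\mathbb{I}_{j}^{\mathcal{K}}(s) \setminus U = \emptyset \in \mathcal{K}$, so $U \in \tau^{\mathbb{I}_j^{\mathcal{K}}}$.

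Next, for closure under arbitrary unions, let $\{F_\alpha\}_{\alpha \in \Lambda} \subseteq \tau^{\mathbb{I}_j^{\mathcal{K}}}$ and set $F = \bigcup_{\alpha} F_\alpha$. Given $s \in F$, choose $\alpha$ with $s \in F_\alpha$; then $\mathbb{I}_{j}^{\mathcal{K}}(s) \setminus F_\alpha \in \mathcal{K}$ by hypothesis. Since $F_\alpha \subseteq F$, the monotonicity of set difference gives the inclusion $\mathbb{I}_{j}^{\mathcal{K}}(s) \setminus F \subseteq \mathbb{I}_{j}^{\mathcal{K}}(s) \setminus F_\alpha$, and heredity of $\mathcal{K}$ then forces $\mathbb{I}_{j}^{\mathcal{K}}(s) \setminus F \in \mathcal{K}$, so $F \in \tau^{\mathbb{I}_j^{\mathcal{K}}}$. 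For closure under finite intersections it suffices to treat two sets $F_1, F_2 \in \tau^{\mathbb{I}_j^{\mathcal{K}}}$ and induct. For $s \in F_1 \cap F_2$ I would use the identity $\mathbb{I}_{j}^{\mathcal{K}}(s) \setminus (F_1 \cap F_2) = \bigl(\mathbb{I}_{j}^{\mathcal{K}}(s) \setminus F_1\bigr) \cup \bigl(\mathbb{I}_{j}^{\mathcal{K}}(s) \setminus F_2\bigr)$; each term on the right lies in $\mathcal{K}$, so their union does too by closure under finite unions, giving $F_1 \cap F_2 \in \tau^{\mathbb{I}_j^{\mathcal{K}}}$.

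I do not expect a genuine obstacle here: the argument is entirely formal, and the whole content is matching each topology axiom to the correct closure property of the ideal (heredity for unions, finite-union closure for intersections, and nonemptiness for the $\emptyset \in \mathcal{K}$ step). The only points demanding care are getting the direction of the set-difference inclusion right in the union case and applying De Morgan correctly in the intersection case; notably the argument never uses any structural property of $\mathbb{I}_{j}^{\mathcal{K}}(s)$ itself, so it holds uniformly for every $j \in \Omega$.
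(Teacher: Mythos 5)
Your proof is correct in every step: the preliminary observation that $\emptyset \in \mathcal{K}$, the vacuous membership of $\emptyset$, the heredity argument for arbitrary unions, and the De Morgan identity $\mathbb{I}_{j}^{\mathcal{K}}(s) \setminus (F_1 \cap F_2) = \bigl(\mathbb{I}_{j}^{\mathcal{K}}(s) \setminus F_1\bigr) \cup \bigl(\mathbb{I}_{j}^{\mathcal{K}}(s) \setminus F_2\bigr)$ combined with closure of $\mathcal{K}$ under finite unions all go through exactly as you say. Note, however, that this paper does not actually prove Theorem \ref{t103}: it is quoted verbatim from the work being corrected (Theorem 5.2 of \cite{Al-shami 2024}), and the corrigendum only remarks that the original proof of the companion Theorem 5.1 confuses the symbols $\mathcal{I}$ and $\mathcal{K}$. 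So there is no in-paper proof to compare against; your argument is the standard one such a proof would use, and it is also correctly observed that no structural property of $\mathbb{I}_{j}^{\mathcal{K}}(s)$ is needed, which is precisely why the same argument yields all three topologies of Theorem \ref{t1} and the ideal-based topology of Theorem \ref{t1111} as well. One small remark: since the paper assumes $U$ is finite, arbitrary unions reduce to finite ones anyway, but your union argument never uses finiteness, so your proof is valid in the more general infinite setting too.
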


If $\mathcal{K} = \{\emptyset\}$ in Theorem 5.2 \cite{Al-shami 2024}, then the resulting topologies are equivalent to those presented in
Theorem 2.11 \cite{Al-shami 2021}. Thus, the  work in \cite{Al-shami 2024} serves as a meaningful extension of the work in  \cite{Al-shami 2021}.

\begin{theorem}\label{t1111} \cite{Al-shami 2021b} Let $(U, R, \mathcal{K})$ be an $\rho$-$G$ approximation space and $s\in U$. Then, for all $j \in \Omega$, the collection $\tau^{{\rho_{j}^{\mathcal{K}}}} = \{F \subseteq U :  \rho_{j}(s) \setminus F \in \mathcal{K}, \forall s \in F\}$ is a topology on $U$.
\end{theorem}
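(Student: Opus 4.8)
The plan is to verify directly the three axioms of a topology for the collection $\tau^{\rho_j^{\mathcal{K}}}$, mirroring the structure already used for the analogous $\mathbb{I}_j^{\mathcal{K}}$-topologies in Theorem \ref{t103}. The whole argument rests on the three defining properties of an ideal from Definition \ref{d7}: that $\mathcal{K}$ is nonempty and hereditary (closed under taking subsets, so in particular $\emptyset \in \mathcal{K}$), and that it is closed under finite unions. Crucially, no specific feature of the $\rho_j$-neighborhoods is needed, so the proof is formally identical to the one for Theorem \ref{t103}.

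First I would dispose of the two extreme sets. The empty set lies in $\tau^{\rho_j^{\mathcal{K}}}$ vacuously, since its membership condition quantifies over $s \in \emptyset$. For the whole space, note that for every $s \in U$ we have $\rho_j(s) \setminus U = \emptyset \in \mathcal{K}$, whence $U \in \tau^{\rho_j^{\mathcal{K}}}$.

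Next I would handle arbitrary unions. Given a family $\{F_\lambda\}_{\lambda \in \Lambda} \subseteq \tau^{\rho_j^{\mathcal{K}}}$ and a point $s \in \bigcup_\lambda F_\lambda$, choose $\lambda_0$ with $s \in F_{\lambda_0}$; then $\rho_j(s) \setminus F_{\lambda_0} \in \mathcal{K}$, and since $F_{\lambda_0} \subseteq \bigcup_\lambda F_\lambda$ we obtain the inclusion $\rho_j(s) \setminus \bigcup_\lambda F_\lambda \subseteq \rho_j(s) \setminus F_{\lambda_0}$. Hereditariness of $\mathcal{K}$ then places the smaller set in $\mathcal{K}$, so $\bigcup_\lambda F_\lambda \in \tau^{\rho_j^{\mathcal{K}}}$.

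Finally, for finite intersections it suffices to treat two members $F_1, F_2 \in \tau^{\rho_j^{\mathcal{K}}}$ and argue by induction. For $s \in F_1 \cap F_2$ both $\rho_j(s)\setminus F_1$ and $\rho_j(s)\setminus F_2$ lie in $\mathcal{K}$, and the set identity $\rho_j(s)\setminus(F_1\cap F_2) = (\rho_j(s)\setminus F_1)\cup(\rho_j(s)\setminus F_2)$ combined with closure of $\mathcal{K}$ under finite unions yields $\rho_j(s)\setminus(F_1\cap F_2)\in\mathcal{K}$. The only step requiring genuine care — the "main obstacle," such as it is — is precisely this intersection step, where one must invoke closure under \emph{finite unions} (rather than hereditariness) through the complement-of-intersection identity; this is the single place where the union-closure axiom of an ideal is indispensable.
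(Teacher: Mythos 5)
Your proof is correct and complete. Note, however, that the paper itself does not prove Theorem \ref{t1111} at all: it is quoted in the Preliminaries as a known result from the cited literature (just as Theorem \ref{t103} is quoted from its source), so there is no in-paper argument to compare against. Your verification is the standard one and settles the claim: $\emptyset$ enters vacuously and $U$ enters because nonemptiness plus hereditariness of $\mathcal{K}$ forces $\emptyset \in \mathcal{K}$; arbitrary unions follow from hereditariness via $\rho_j(s) \setminus \bigcup_\lambda F_\lambda \subseteq \rho_j(s) \setminus F_{\lambda_0}$; and binary (hence finite) intersections follow from the identity $\rho_j(s)\setminus(F_1\cap F_2) = (\rho_j(s)\setminus F_1)\cup(\rho_j(s)\setminus F_2)$ together with closure of $\mathcal{K}$ under finite unions. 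You are also right that the argument never uses any structural property of the $\rho_j$-neighborhoods, which is exactly why the same proof covers the $\omega_j$, $\mathbb{I}_j$, and $\mathbb{I}_j^{\mathcal{K}}$ variants (Theorems \ref{t1} and \ref{t103}) uniformly.
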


\section{Main results}

This section serves as the focal point of the research, where we critically analyze the errors identified in the study conducted by Al-Shami and Hosny \cite{Al-shami 2024}. We substantiate these inaccuracies by presenting multiple counterexamples and offer corrections for the identified errors. Moreover, we clarify and rectify mistakes in the proofs of certain results while addressing inconsistencies in the examples and comparison tables presented in their study. \newline

The following theorem, designated as Theorem 3.4 in \cite{Al-shami 2024}, contains errors that require correction.

\begin{theorem}\label{t3} \cite{Al-shami 2024} Let $(U, R, \mathcal{K})$ be an $\mathbb{I}$-$G$ approximation space with $s\in U$. The following statements hold:
\begin{enumerate}
\item  $\mathbb{I}^{\mathcal{K}}_{i}(s) \subseteq \mathbb{I}^{\mathcal{K}}_{a}(s) \cap \mathbb{I}^{\mathcal{K}}_{b}(s) \subseteq \mathbb{I}^{\mathcal{K}}_{a}(s) \cup \mathbb{I}^{\mathcal{K}}_{b}(s) \subseteq \mathbb{I}^{\mathcal{K}}_{u}(s)$.
\item  $\mathbb{I}^{\mathcal{K}}_{<i>}(s) \subseteq \mathbb{I}^{\mathcal{K}}_{<a>}(s) \cap \mathbb{I}^{\mathcal{K}}_{<b>}(s) \subseteq \mathbb{I}^{\mathcal{K}}_{<a>}(s) \cup \mathbb{I}^{\mathcal{K}}_{<b>}(s) \subseteq \mathbb{I}^{\mathcal{K}}_{<u>}(s)$.
\item  If $R$ is reflexive relation, then $\rho_{j}(s) \cup \omega_{j}(s) \subseteq \mathbb{I}^{\mathcal{K}}_{j}(s)$, for all $j \in \Omega$.
\item  If $R$ is serial, then $\rho_{j}(s) \subseteq \mathbb{I}^{\mathcal{K}}_{j}(s)$, for all $j \in \Omega$.
\item  If $R$ is transitive, then $\mathbb{I}^{\mathcal{K}}_{j}(s) \subseteq \mathbb{I}^{\mathcal{K}}_{<j>}(s)$, for each $j \in \{a, b, i, u\}$.
\end{enumerate}
\end{theorem}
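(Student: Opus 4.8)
The plan is to treat the five claims in three groups by increasing difficulty, disposing of the purely set-theoretic ones first and reserving the relational hypotheses for the end. Parts (1) and (2) I would dispatch directly from the defining identities $\mathbb{I}^{\mathcal{K}}_{i}(s) = \mathbb{I}^{\mathcal{K}}_{a}(s) \cap \mathbb{I}^{\mathcal{K}}_{b}(s)$ and $\mathbb{I}^{\mathcal{K}}_{u}(s) = \mathbb{I}^{\mathcal{K}}_{a}(s) \cup \mathbb{I}^{\mathcal{K}}_{b}(s)$ (together with their $<\cdot>$ analogues): the two outer inclusions are in fact equalities by definition, and the central inclusion is the trivial fact that $A \cap B \subseteq A \cup B$. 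No hypothesis on $R$ or $\mathcal{K}$ enters here.

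For part (5) I would first establish the auxiliary set inclusion $\omega_{a}(s) \subseteq \omega_{<a>}(s)$, and symmetrically $\omega_{b}(s) \subseteq \omega_{<b>}(s)$, whenever $R$ is transitive: if $t \in \omega_{a}(s)$, i.e. $sRt$, then for every $p$ with $s \in \omega_{a}(p)$ (that is, $pRs$) transitivity gives $pRt$, so $t \in \omega_{a}(p)$; hence $t$ lies in every after-neighborhood containing $s$, which is exactly $t \in \omega_{<a>}(s)$. Intersecting the two inclusions yields $\omega_{a}(t) \cap \omega_{a}(s) \subseteq \omega_{<a>}(t) \cap \omega_{<a>}(s)$, and the decisive step is to invoke the downward closure of the ideal: since $\mathcal{K}$ contains every subset of each of its members, a set lying outside $\mathcal{K}$ cannot be contained in a set lying inside $\mathcal{K}$, so $\omega_{a}(t) \cap \omega_{a}(s) \notin \mathcal{K}$ forces $\omega_{<a>}(t) \cap \omega_{<a>}(s) \notin \mathcal{K}$. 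This is precisely $\mathbb{I}^{\mathcal{K}}_{a}(s) \subseteq \mathbb{I}^{\mathcal{K}}_{<a>}(s)$, the $b$-case being symmetric; the cases $j = i$ and $j = u$ then follow by intersecting and unioning the $a$- and $b$-inclusions via the identities used in parts (1)--(2).

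The main obstacle is parts (3) and (4), and I expect this to be exactly where the stated theorem fails. The natural attempt is, for $t \in \omega_{a}(s)$ in the reflexive case or $t \in \rho_{a}(s)$ in the serial case, to note that $\omega_{a}(t) \cap \omega_{a}(s)$ is nonempty: reflexivity places $t$ itself in the intersection, while seriality combined with $\omega_{a}(t) = \omega_{a}(s)$ keeps it nonempty. However, membership in $\mathbb{I}^{\mathcal{K}}_{a}(s)$ demands that this intersection lie \emph{outside} $\mathcal{K}$, and mere nonemptiness does not secure that, since the ideal may contain nonempty sets --- possibly the very intersection at hand. Consequently the argument that succeeds verbatim for $\mathbb{I}_{j}$ (the case $\mathcal{K} = \{\emptyset\}$, where ``$\notin \mathcal{K}$'' simply means ``nonempty'') does not transfer to general $\mathcal{K}$. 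I would therefore not try to prove (3) and (4) as written, but instead look for an ideal $\mathcal{K}$ that absorbs $\omega_{j}(s)$ to build an explicit counterexample, and record a corrected version guarded by an extra hypothesis such as $\omega_{j}(s) \notin \mathcal{K}$.
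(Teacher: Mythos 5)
First, a framing point: this is a corrigendum, and the paper never proves the quoted theorem --- it refutes it. Your reading of items (1)--(2) (the outer inclusions are definitional equalities, the middle one is just $A\cap B\subseteq A\cup B$) is the same observation the paper makes when it dismisses these items as misformulated. Your diagnosis of items (3)--(4) is also exactly the paper's: nonemptiness of $\omega_{j}(t)\cap\omega_{j}(s)$ does not prevent that intersection from lying in $\mathcal{K}$, and the claims are killed by choosing an ideal that absorbs a neighborhood. This is Remark \ref{rem3-2}, implemented in Examples \ref{e1}, \ref{e18} and \ref{e5} (e.g.\ $\omega_{a}(s)=\{s\}$ with $\mathcal{K}=\{\emptyset,\{s\}\}$ gives $\mathbb{I}^{\mathcal{K}}_{a}(s)=\emptyset$ even for a preorder, so $\rho_{a}(s)\nsubseteq\mathbb{I}^{\mathcal{K}}_{a}(s)$ and $\omega_{a}(s)\nsubseteq\mathbb{I}^{\mathcal{K}}_{a}(s)$).

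The genuine gap is item (5): you claim to prove it, but it is false, and the paper refutes it in Example \ref{e4}. The flaw sits in your auxiliary lemma ``$R$ transitive $\Rightarrow\omega_{a}(s)\subseteq\omega_{<a>}(s)$'': your argument quantifies over all $p$ with $s\in\omega_{a}(p)$, but says nothing when no such $p$ exists. In this literature --- and visibly in the paper's own computations --- the minimal neighborhood is then the \emph{empty} intersection and is taken to be $\emptyset$, not $U$, and transitivity alone does not exclude such points. Concretely, take $R=\{(p,s),(p,t),(p,q),(t,q),(t,t)\}$ on $U=\{p,q,s,t\}$ and $\mathcal{K}=\{\emptyset,\{t\}\}$: the point $q$ has no $R$-successor, so no before-neighborhood contains $q$, giving $\omega_{<b>}(q)=\emptyset$ and hence $\mathbb{I}^{\mathcal{K}}_{<b>}(q)=\emptyset$, while $\omega_{b}(q)=\{p,t\}$ gives $\mathbb{I}^{\mathcal{K}}_{b}(q)=\{q,s,t\}\nsubseteq\emptyset$. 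The remainder of your argument (monotonicity of intersections plus downward closure of the ideal, and deriving the $i,u$ cases from the $a,b$ cases) is sound; only the lemma fails, and it fails precisely at the points the counterexample exploits. The correct repair is to restore reflexivity: for a preorder every point lies in its own $\omega_{j}$-neighborhood, your lemma then holds in the strong form $\omega_{<j>}(s)=\omega_{j}(s)$, and one obtains the equality $\mathbb{I}^{\mathcal{K}}_{<j>}(s)=\mathbb{I}^{\mathcal{K}}_{j}(s)$ --- this is item (3) of the paper's corrected replacement theorem, proved there by combining Theorem \ref{t104} with the preorder identity $\omega_{<j>}(s)=\omega_{j}(s)$.
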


\begin{remark}
Firstly, items 1 and 2 are incorrect, as the equalities cannot be replaced by subset symbols according to Definition \ref{d2}. The following examples demonstrate errors in the remaining items of the preceding theory.
\end{remark}
\color{black}
\begin{example}\label{e1} Let $R = \{(p, s), (p, t), (q, t), (t, q), (p, p), (q, q), (s, s), (t, t)\}$ be a reflexive relation on $U=\{p, q, s, t\}$. Therefore,
$\omega_{a}(t)=\{q, t\}$, $\omega_{b}(t)=\{p, q, t\}$, $\omega_{i}(t)=\{q, t\}$, $\omega_{u}(t)=\{p, q, t\}$,\\
 $\omega_{<a>}(t)=\{t\}$, $\omega_{<b>}(t)=\{q, t\}$, $\omega_{<i>}(t)=\{t\}$, $\omega_{<u>}(t)=\{q, t\}$,\\
 $\rho_{a}(t)=\{q, t\}$, $\rho_{b}(t)=\{t\}$, $\rho_{i}(t)=\{t\}$, $\rho_{u}(t)=\{q, t\}$,\\
 $\rho_{<a>}(t)=\{t\}$, $\rho_{<b>}(t)=\{q, t\}$, $\rho_{<i>}(t)=\{t\}$, $\rho_{<u>}(t)=\{q, t\}$.\\
 If $\mathcal{K} = \{\emptyset, \{t\}\}$, then
$\mathbb{I}^{\mathcal{K}}_{a}(t)=\{q, t\}$, $\mathbb{I}^{\mathcal{K}}_{b}(t)=U$, $\mathbb{I}^{\mathcal{K}}_{i}(t)=\{q, t\}$, $\mathbb{I}^{\mathcal{K}}_{u}(t)=U$,\\
 $\mathbb{I}^{\mathcal{K}}_{<a>}(t)=\emptyset$, $\mathbb{I}^{\mathcal{K}}_{<b>}(t)=\{q, t\}$, $\mathbb{I}^{\mathcal{K}}_{<i>}(t)=\emptyset$, $\mathbb{I}^{\mathcal{K}}_{<u>}(t)=\{q, t\}$.\\
 Clearly, $\rho_{j}(t) \nsubseteq \mathbb{I}^{\mathcal{K}}_{j}(t)$, and $\omega_{j}(t) \nsubseteq \mathbb{I}^{\mathcal{K}}_{j}(t)$, when $j\in\{<a>, <i>\}$.
\end{example}

\begin{example}\label{e18} Let $R = \{(p, p), (s, p), (t, p), (q, t), (t, q), (t, t)\}$ be a serial relation on $U=\{p, q, s, t\}$.
As a result,
  $\omega_{a}(p)=\{p\}$, $\omega_{a}(q)=\{t\}$, $\omega_{a}(s)=\{p\}$, $\omega_{a}(t)=\{p, q, t\}$,\\
  $\rho_{a}(p)=\{p, s\}$, $\rho_{a}(q)=\{q\}$, $\rho_{a}(s)=\{p, s\}$, $\rho_{a}(t)=\{t\}$,\\
 $\mathbb{I}_{a}(p)=\{p, s, t\}$, $\mathbb{I}_{a}(q)=\{q, t\}$, $\mathbb{I}_{a}(s)=\{p, s, t\}$, $\mathbb{I}_{a}(t)=U$.\\
 If $\mathcal{K} = \{\emptyset, \{t\}\}$, then $\mathbb{I}^{\mathcal{K}}_{a}(p)=\{p, s, t\}$, $\mathbb{I}^{\mathcal{K}}_{a}(q)=\emptyset$, $\mathbb{I}^{\mathcal{K}}_{a}(s)=\{p, s, t\}$, $\mathbb{I}^{\mathcal{K}}_{a}(t)=\{p, s, t\}$,\\
  Clearly, $\rho_{a}(q) \nsubseteq \mathbb{I}^{\mathcal{K}}_{a}(q)$.
\end{example}

\begin{example}\label{e4}Let $R = \{(p, s), (p, t), (p, q), (t, q), (t, t)\}$ be a transitive relation on $U=\{p, q, s, t\}$. Hence,
$\omega_{b}(p)=\emptyset$, $\omega_{b}(q)=\{p, t\}$, $\omega_{b}(s)=\{p\}$, $\omega_{b}(t)=\{p, t\}$,\\
 $\omega_{<b>}(p)=\{p\}$, $\omega_{<b>}(q)=\emptyset$, $\omega_{<b>}(s)=\emptyset$, $\omega_{<b>}(t)=\{p, t\}$, \\
 $\rho_{b}(p)=\{p\}$, $\rho_{b}(q)=\{q, t\}$, $\rho_{b}(s)=\{s\}$, $\rho_{b}(t)=\{q, t\}$,\\
 $\rho_{<b>}(p)=\{p\}$, $\rho_{<b>}(q)=\{q, s\}$, $\rho_{<b>}(s)=\{q, s\}$, $\rho_{<b>}(t)=\{t\}$.\\
 If $\mathcal{K}=\{\emptyset, \{t\}\}$, then
$\mathbb{I}^{\mathcal{K}}_{b}(p)=\emptyset$, $\mathbb{I}^{\mathcal{K}}_{b}(q)=\{q, s, t\}$, $\mathbb{I}^{\mathcal{K}}_{b}(s)=\{q, s, t\}$, $\mathbb{I}^{\mathcal{K}}_{b}(t)=\{q, s, t\}$,
$\mathbb{I}^{\mathcal{K}}_{<b>}(p)=\{p, t\}$, $\mathbb{I}^{\mathcal{K}}_{<b>}(q)=\emptyset$, $\mathbb{I}^{\mathcal{K}}_{<b>}(s)=\emptyset$, $\mathbb{I}^{\mathcal{K}}_{<b>}(t)=\{p, t\}$. Consequently, $\mathbb{I}^{\mathcal{K}}_{j}(x) \neq \mathbb{I}^{\mathcal{K}}_{<j>}(x)$, for all $x\in U$.\color{black}
\end{example}

\begin{example}\label{e5} Let $R = \{(p, p), (q, q), (s, s), (p, q), (p, s), (q, s)\}$ be a preorder relation on $U=\{p, q, s \}$. Hence,
$\omega_{a}(p)= U$, $\omega_{a}(q)=\{q, s\}$, $\omega_{a}(s)=\{s\}$, \\
 $\rho_{a}(p)= \{p\}$, $\rho_{a}(q)= \{q\}$, and $\rho_{a}(s)=\{s\}$.\\
If $\mathcal{K}=\{\emptyset, \{s\}\}$, then \color{black} $\mathbb{I}^{\mathcal{K}}_{a}(p)=\{p, q\}$, $\mathbb{I}^{\mathcal{K}}_{a}(q)=\{p, q\}$, $\mathbb{I}^{\mathcal{K}}_{a}(s)=\emptyset$. It is clear that $\omega_{a}(x) \nsubseteq \mathbb{I}^{\mathcal{K}}_{a}(x)$, for all $x\in U$. Additionally, $\rho_{a}(s) \nsubseteq \mathbb{I}^{\mathcal{K}}_{a}(s)$ \color{black}.
\end{example}

\begin{remark} \label{rem3-2}
\begin{enumerate}
\item For all $j \in \Omega$, $x\in U$, \color{black} $\rho_{j}(x) \nsubseteq \mathbb{I}^{\mathcal{K}}_{j}(x)$ if $R$ is serial (resp. reflexive, symmetric, transitive, preorder, or similarity), as shown in Examples \ref{e1}, \ref{e18}, \ref{e4}, and \ref{e5}.
\item For all $j \in \Omega$,  $x, y\in U$, \color{black} if $\omega_{j}(x) \cap \omega_{j}(y) \neq \emptyset$, this does not imply that $\omega_{j}(x) \cap \omega_{j}(y) \not\in \mathcal{K}$, as shown in Examples \ref{e1}, \ref{e18}, \ref{e4}, and \ref{e5}.
\item For all $j \in \Omega$, $x, y\in U$, \color{black} if $\mathbb{I}^{\mathcal{K}}_{j}(x) \cap \mathbb{I}^{\mathcal{K}}{j}(y) \neq \emptyset$, this does not imply that $\mathbb{I}^{\mathcal{K}}_{j}(x) \cap \mathbb{I}^{\mathcal{K}}_{j}(y) \not\in \mathcal{K}$, as shown in Examples \ref{e1}, \ref{e18}, \ref{e4}, and \ref{e5}.
\end{enumerate}
\end{remark}


Consequently, the corrected formulation of Theorem 3.4 from \cite{Al-shami 2024} is presented as follows:

\begin{theorem}\label{t3} Let $(U, R, \mathcal{K})$ be an $\mathbb{I}$-$G$ approximation space and $s, t\in U$. The following statements are valid:
\begin{enumerate}
\item  $ t\in \mathbb{I}^{\mathcal{K}}_{j}(s) \Leftrightarrow s \in \mathbb{I}^{\mathcal{K}}_{j}(t)$, for all $j$ $ \in \Omega$.
\item  If $R$ is reflexive, then $\mathbb{I}^{\mathcal{K}}_{<j>}(s)\subseteq \mathbb{I}^{\mathcal{K}}_{j}(s)$, for each $j \in \{a, b, i, u\}$.
\item  If $R$ is preorder, then $\mathbb{I}^{\mathcal{K}}_{<j>}(s)=\mathbb{I}^{\mathcal{K}}_{j}(s)$, for each $j \in \{a, b, i, u\}$.
\item  If $R$ is reflexive, then $\rho_{j}(s) \cap \omega_{j}(s) = \rho_{j}(s)$ and $\rho_{j}(s) \cup \omega_{j}(s) = \omega_{j}(s)$, for all $j \in \Omega$.
\item  If $R$ is symmetric, then $\mathbb{I}^{\mathcal{K}}_{i}(s) = \mathbb{I}^{\mathcal{K}}_{a}(s) = \mathbb{I}^{\mathcal{K}}_{b}(s) = \mathbb{I}^{\mathcal{K}}_{u}(s)$ and $\mathbb{I}^{\mathcal{K}}_{<i>}(s) = \mathbb{I}^{\mathcal{K}}_{<a>}(s) = \mathbb{I}^{\mathcal{K}}_{<b>}(s) = \mathbb{I}^{\mathcal{K}}_{<u>}(s)$.
\end{enumerate}
\end{theorem}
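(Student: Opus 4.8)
The plan is to push every assertion down to the level of the underlying $\omega$-neighborhoods, establish the relevant inclusion, equality, or symmetry there, and then lift it to the $\mathbb{I}^{\mathcal{K}}$-level. The lifting rests on two structural facts: the ideal $\mathcal{K}$ is closed under subsets (Definition \ref{d7}), so that $A \subseteq B$ together with $B \in \mathcal{K}$ forces $A \in \mathcal{K}$; and the operators $\cap$ and $\cup$ are monotone and commutative. In each item it also suffices to treat the base indices $a$ and $b$ (and their minimal analogues $<a>$, $<b>$), since the cases $i, u, <i>, <u>$ follow at once from the defining identities $\mathbb{I}^{\mathcal{K}}_{i} = \mathbb{I}^{\mathcal{K}}_{a} \cap \mathbb{I}^{\mathcal{K}}_{b}$ and $\mathbb{I}^{\mathcal{K}}_{u} = \mathbb{I}^{\mathcal{K}}_{a} \cup \mathbb{I}^{\mathcal{K}}_{b}$ of Definition \ref{d12} together with monotonicity.

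For item (1), I would observe that for $j \in \{a, b, <a>, <b>\}$ the defining condition for $t \in \mathbb{I}^{\mathcal{K}}_{j}(s)$, namely $\omega_{j}(t) \cap \omega_{j}(s) \notin \mathcal{K}$, is symmetric in $s$ and $t$ because $\cap$ is commutative; hence $t \in \mathbb{I}^{\mathcal{K}}_{j}(s) \Leftrightarrow s \in \mathbb{I}^{\mathcal{K}}_{j}(t)$, and the composite indices follow by intersecting or unioning the equivalences. For item (5), symmetry of $R$ gives $\omega_{a}(s) = \omega_{b}(s)$ for every $s$ (both equal $\{t : sRt\}$), whence also $\omega_{<a>}(s) = \omega_{<b>}(s)$, the two minimal neighborhoods being intersections of the same family; substituting these equalities into Definition \ref{d12} collapses $\mathbb{I}^{\mathcal{K}}_{a} = \mathbb{I}^{\mathcal{K}}_{b}$ and $\mathbb{I}^{\mathcal{K}}_{<a>} = \mathbb{I}^{\mathcal{K}}_{<b>}$, and the $i$- and $u$-versions then coincide with the common value.

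For item (4), I would note that each of the two asserted equalities is equivalent to the single inclusion $\rho_{j}(s) \subseteq \omega_{j}(s)$, using that $A \cap B = A$ and $A \cup B = B$ are both equivalent to $A \subseteq B$. To prove this inclusion under reflexivity, take $t \in \rho_{j}(s)$; for $j \in \{a, <a>\}$ this means $\omega_{j}(t) = \omega_{j}(s)$ by Definition \ref{d4}, and reflexivity gives $t \in \omega_{j}(t)$ (for the minimal index this holds because $t$ belongs to every after-neighborhood containing it, hence to their intersection), so $t \in \omega_{j}(t) = \omega_{j}(s)$. The $b$ and $<b>$ cases are identical with before-neighborhoods, and $i, u, <i>, <u>$ follow by the defining intersections and unions.

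The substantive items are (2) and (3), and here lies the main obstacle. Item (2) is precisely Theorem \ref{t104}; its engine is that reflexivity yields $\omega_{<j>}(s) \subseteq \omega_{j}(s)$ for $j \in \{a, b\}$, since $s \in \omega_{j}(s)$ makes $\omega_{j}(s)$ one of the neighborhoods whose intersection defines $\omega_{<j>}(s)$, after which the downward-closure of $\mathcal{K}$ transfers the inclusion: if $\omega_{<j>}(t) \cap \omega_{<j>}(s) \notin \mathcal{K}$, then the larger set $\omega_{j}(t) \cap \omega_{j}(s) \notin \mathcal{K}$ as well. For item (3), I would strengthen this to an equality at the $\omega$-level using transitivity: for a preorder one checks $\omega_{a}(s) \subseteq \omega_{<a>}(s)$ by showing that any $t$ with $sRt$ lies in $\omega_{a}(x)$ for every $x$ with $xRs$ (transitivity applied to $xRs$ and $sRt$ gives $xRt$), so that $\omega_{<a>}(s) = \omega_{a}(s)$ and likewise $\omega_{<b>}(s) = \omega_{b}(s)$. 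Substituting these identities into Definition \ref{d12} then gives $\mathbb{I}^{\mathcal{K}}_{<j>}(s) = \mathbb{I}^{\mathcal{K}}_{j}(s)$ for $j \in \{a, b\}$, and the composite indices follow as before. The care required in item (3) is entirely in unpacking the ``intersection of all after-neighborhoods containing $s$'' definition of $\omega_{<a>}$ and in getting the direction of transitivity right; once $\omega_{<a>}(s) = \omega_{a}(s)$ is in hand, the passage to the $\mathbb{I}^{\mathcal{K}}$-neighborhoods is purely formal.
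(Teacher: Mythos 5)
Your proof is correct, and its mathematical skeleton is the same as the paper's: item (3) rests on the identity $\omega_{<j>}(s)=\omega_{j}(s)$ for preorders, and item (4) on the inclusion $\rho_{j}(s)\subseteq\omega_{j}(s)$ for reflexive relations, each followed by the same formal lifting to the $\mathbb{I}^{\mathcal{K}}$-level. The difference is self-containedness rather than route: the paper proves nothing for items (1), (2), (5) --- it defers them to the original Al-shami--Hosny paper being corrected --- and it imports the two key lemmas by citation (the preorder identity from \cite{Abu Gdairi 2024}, the reflexive inclusion from \cite{El-Bably 2021}), whereas you derive everything from the definitions: commutativity of $\cap$ for (1), heredity of $\mathcal{K}$ together with $\omega_{<j>}(s)\subseteq\omega_{j}(s)$ for (2), the transitivity computation giving $\omega_{a}(s)\subseteq\omega_{<a>}(s)$ for (3), reflexivity giving $t\in\omega_{j}(t)$ for (4), and the collapse $\omega_{a}=\omega_{b}$ (hence $\omega_{<a>}=\omega_{<b>}$) for (5). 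Your version buys independence from external sources, at the cost of length; it also makes explicit two points the paper leaves implicit, namely that the $\omega$-level facts must be applied at both $s$ and $t$ simultaneously, and that subset-closedness of the ideal is exactly what transfers ``not in $\mathcal{K}$'' from the smaller intersection to the larger one --- without heredity the step from $\omega_{<j>}(t)\cap\omega_{<j>}(s)\notin\mathcal{K}$ to $\omega_{j}(t)\cap\omega_{j}(s)\notin\mathcal{K}$ would fail.
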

\begin{proof}
First, the proofs of items (1), (2), and (5) are provided in \cite{Al-shami 2024}. We will now prove the remaining items that are not stated in \cite{Al-shami 2024} as follows:

\noindent (3) According to \cite{Abu Gdairi 2024},  since $R$ is a preorder, we have $\omega_{<j>}(s) = \omega_{j}(s)$ for all $j \in \Omega$. Therefore, based on items (1) and (2), we conclude that $\mathbb{I}^{\mathcal{K}}_{<j>}(s) = \mathbb{I}^{\mathcal{K}}_{j}(s)$.

\noindent (4) As noted in  \cite{El-Bably 2021}, since $R$ is reflexive, we find that $\rho_{j}(s) \subseteq \omega_{j}(s)$ for all $j \in \Omega$. Thus, it follows that, $\rho_{j}(s) \cap \omega_{j}(s) = \rho_{j}(s)$ and $\rho_{j}(s) \cup \omega_{j}(s) = \omega_{j}(s)$ for all $j \in \Omega$.
\end{proof}

As noted in Remark \ref{rem3-2}, the proof of item (8) in Theorem 3.4 from \cite{Al-shami 2024} contains errors. To address this, we present the following lemma, along with a detailed proof (absent in \cite{Al-shami 2024}), as a correction to item (8) of Theorem 3.4 in \cite{Al-shami 2024} and also to support the proof of the Theorem \ref{t4}:

\begin{lemma} \label{l1} Let $R$ be a symmetric and transitive relation on $U$. If $s \in \omega_{j}(t)$, then $\omega_{j}(s) = \omega_{j}(t)$ for all $s, t \in U$ and $j \in \Omega$.
\end{lemma}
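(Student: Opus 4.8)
The plan is to reduce all eight cases $j \in \Omega$ to the single after-neighborhood case, exploiting the fact that a symmetric and transitive relation collapses every $\omega$-neighborhood of a point to one common set. First I would record two elementary consequences of the hypotheses. Symmetry gives $sRx \Leftrightarrow xRs$, hence $\omega_{a}(s) = \omega_{b}(s)$ for every $s \in U$; consequently $\omega_{i}(s) = \omega_{a}(s) \cap \omega_{b}(s) = \omega_{a}(s)$ and $\omega_{u}(s) = \omega_{a}(s) \cup \omega_{b}(s) = \omega_{a}(s)$, so the four non-minimal neighborhoods coincide.

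The core step is the claim for $j = a$: if $s \in \omega_{a}(t)$, that is $tRs$, then $\omega_{a}(s) = \omega_{a}(t)$. Symmetry first yields $sRt$. For $\omega_{a}(s) \subseteq \omega_{a}(t)$, take $x$ with $sRx$; from $tRs$ and $sRx$ transitivity gives $tRx$, so $x \in \omega_{a}(t)$. Symmetrically, from $sRt$ and $tRx$ transitivity gives $sRx$, proving $\omega_{a}(t) \subseteq \omega_{a}(s)$, and hence equality.

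The hard part will be the minimal neighborhoods, because $\omega_{<a>}(s)$ is the intersection of all after-neighborhoods containing $s$ and $R$ is not assumed reflexive, so $s$ need not belong to $\omega_{a}(s)$ and the defining family may even be empty. I would establish $\omega_{<a>}(s) = \omega_{a}(s)$ by splitting into two cases. If some $p$ satisfies $s \in \omega_{a}(p)$, then by the core step each such $\omega_{a}(p)$ equals $\omega_{a}(s)$, so the resulting nonempty intersection is exactly $\omega_{a}(s)$. If no such $p$ exists, i.e. no $p$ with $pRs$, then $\omega_{<a>}(s) = \emptyset$ by the usual convention, while $\omega_{a}(s) = \emptyset$ as well, since $sRx$ would force $xRs$ by symmetry and produce such a $p$; thus both are empty. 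Applying symmetry once more gives $\omega_{<b>}(s) = \omega_{b}(s) = \omega_{a}(s)$, whence $\omega_{<i>}(s) = \omega_{<u>}(s) = \omega_{a}(s)$ too.

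To conclude, I would observe that all eight neighborhoods coincide: $\omega_{j}(s) = \omega_{a}(s)$ for every $j \in \Omega$ and every $s \in U$. Writing $N := \omega_{a}$, the hypothesis $s \in \omega_{j}(t)$ becomes $s \in N(t)$, and the core step delivers $N(s) = N(t)$, that is $\omega_{j}(s) = \omega_{j}(t)$ for all $j \in \Omega$, as required.
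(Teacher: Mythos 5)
Your proposal is correct, and its core step is exactly the paper's proof: the paper also treats only $j=a$, deriving $sRt$ from symmetry and then chasing $tRs,\ sRx \Rightarrow tRx$ and $sRt,\ tRy \Rightarrow sRy$ by transitivity to get the two inclusions. Where you genuinely differ is in what happens after that. The paper simply declares that "the other cases follow similarly," which is not literally true for the minimal neighborhoods: $\omega_{\langle a\rangle}(s)$ is an intersection over a family of after-neighborhoods, so the two-inclusion element chase does not transfer verbatim. You close this gap by proving a collapse: under symmetry $\omega_{a}=\omega_{b}$ (hence $\omega_{i}=\omega_{u}=\omega_{a}$), and under symmetry plus transitivity $\omega_{\langle a\rangle}(s)=\omega_{a}(s)$, via the case split on whether some after-neighborhood contains $s$ (if so, the core step forces every such neighborhood to equal $\omega_{a}(s)$; if not, both sides are empty). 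Your appeal to the empty-intersection convention $\omega_{\langle a\rangle}(s)=\emptyset$ is consistent with the paper's own usage (e.g., in Example \ref{e4} the paper sets $\omega_{<b>}(q)=\emptyset$ when no before-neighborhood contains $q$). The payoff of your route is a single unified reduction — all eight neighborhoods coincide, so one case suffices — which makes rigorous precisely the step the paper leaves implicit; the cost is the extra bookkeeping of the collapse, which the paper avoids by fiat.
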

\begin{proof}
We provide the proof for the case $j = a$, noting that the other cases follow similarly. \\

First, let us assume that $s \in \omega_{a}(t)$. Then,
\begin{equation}  \label{Eq01}
tRs
\end{equation}
Since $R$ is symmetric, we have
\begin{equation}  \label{Eq02}
sRt
\end{equation}

Now, let $x \in \omega_{a}(s)$. Then,
\begin{equation}  \label{Eq03}
sRx
\end{equation}

From Equations \ref{Eq01} and \ref{Eq03}, and given that $R$ is transitive, it follows that $tRx$, which implies $x \in \omega_{a}(t)$. Hence, $\omega_{a}(s) \subseteq \omega_{a}(t)$.

Conversely, let $y \in \omega_{a}(t)$. Then,
\begin{equation}  \label{Eq04}
tRy
\end{equation}

by referring to Equations \ref{Eq02} and \ref{Eq04}, it follows that $sRy$ due to the transitivity of $R$.
This indicates that $y \in \omega_{a}(s)$, leading to the conclusion that $\omega_{a}(t) \subseteq \omega_{a}(s)$.

Thus, we arrive at the result $\omega_{a}(s) = \omega_{a}(t)$.
\end{proof}

The proof for item (8) of Theorem 3.4 in \cite{Al-shami 2024} is incorrect. Accordingly, we will rectify  and rephrase it as follows:

\begin{theorem}\label{t4} \cite{Al-shami 2024} Let $R$ be a symmetric and transitive relation on $U$, and let $\mathcal{K}$ be an ideal on $U$. If $s \in U$ and  $j \in \Omega$, then the following holds:
\begin{enumerate}
\item $\mathbb{I}^{\mathcal{K}}_{j}(s) \subseteq \omega_{j}(s)$.

\item If $s \in \mathbb{I}^{\mathcal{K}}_{j}(t)$, then $\mathbb{I}^{\mathcal{K}}_{j}(s) \subseteq \mathbb{I}^{\mathcal{K}}_{j}(t)$.
\end{enumerate}
\end{theorem}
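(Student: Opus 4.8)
The plan is to reduce everything to the two ``base'' neighborhood families and then lean on Lemma \ref{l1} together with the defining property of an ideal. The single most useful observation is that, since $\mathcal{K}$ is an ideal, it contains $\emptyset$; hence for any $j$ the hypothesis $\omega_{j}(x) \cap \omega_{j}(s) \notin \mathcal{K}$ already forces $\omega_{j}(x) \cap \omega_{j}(s) \neq \emptyset$. I would record this once and reuse it throughout. I would also first note that under symmetry the eight types collapse: $\omega_{a}=\omega_{b}$, so $\omega_{i}=\omega_{u}=\omega_{a}$, and likewise $\omega_{<a>}=\omega_{<b>}=\omega_{<i>}=\omega_{<u>}$; consequently the corresponding $\mathbb{I}^{\mathcal{K}}_{j}$ coincide as well (this is item (5) of the corrected theorem above). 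This lets me prove both items only for the representatives $j=a$ and $j=<a>$, writing out $j=a$ in full and remarking that $j=<a>$ is identical after replacing $\omega_{a}$ by $\omega_{<a>}$ and invoking Lemma \ref{l1} for that index.

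For item (1), let $x \in \mathbb{I}^{\mathcal{K}}_{a}(s)$. By the ideal remark there is some $z \in \omega_{a}(x)\cap\omega_{a}(s)$. The crucial intermediate claim is a \emph{local reflexivity}: since $z\in\omega_{a}(x)$ gives $xRz$, symmetry yields $zRx$ and then transitivity yields $xRx$, i.e.\ $x\in\omega_{a}(x)$. Next I apply Lemma \ref{l1} twice to the common element $z$: from $z\in\omega_{a}(x)$ we get $\omega_{a}(z)=\omega_{a}(x)$, and from $z\in\omega_{a}(s)$ we get $\omega_{a}(z)=\omega_{a}(s)$, so $\omega_{a}(x)=\omega_{a}(s)$. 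Combining with local reflexivity gives $x\in\omega_{a}(x)=\omega_{a}(s)$, which is the desired inclusion $\mathbb{I}^{\mathcal{K}}_{a}(s)\subseteq\omega_{a}(s)$.

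For item (2), suppose $s\in\mathbb{I}^{\mathcal{K}}_{a}(t)$. Again the ideal remark supplies $w\in\omega_{a}(s)\cap\omega_{a}(t)$, and Lemma \ref{l1} applied to $w$ forces $\omega_{a}(s)=\omega_{a}(w)=\omega_{a}(t)$. Since membership in $\mathbb{I}^{\mathcal{K}}_{a}(\cdot)$ is governed solely by the set $\omega_{a}(\cdot)$ — indeed $x\in\mathbb{I}^{\mathcal{K}}_{a}(s)\Leftrightarrow \omega_{a}(x)\cap\omega_{a}(s)\notin\mathcal{K}$ — equality of $\omega_{a}(s)$ and $\omega_{a}(t)$ immediately yields $\mathbb{I}^{\mathcal{K}}_{a}(s)=\mathbb{I}^{\mathcal{K}}_{a}(t)$, which is in fact stronger than the claimed inclusion.

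The main obstacle I anticipate is not in the base cases but in handling the minimal index $j=<a>$ (and, if one insists on treating them directly rather than through the symmetry collapse, the composite indices $i,u,<i>,<u>$). There the local-reflexivity step must be re-justified for $\omega_{<a>}$, where the subtlety is the empty-family convention: one has to confirm that $z\in\omega_{<a>}(x)$ makes the defining family of after-neighborhoods containing $x$ nonempty, so that $x\in\omega_{<a>}(x)$. Once Lemma \ref{l1} is granted for every $j\in\Omega$ (as already established in the excerpt), this is a short verification, and the remainder of the argument transcribes verbatim.
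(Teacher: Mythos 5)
Your proof is correct, but it takes a route that differs from the paper's at both steps, and in one respect it is more complete. For item (1) the paper argues directly from the relation, and only for $j=a$: picking $q\in\omega_{a}(p)\cap\omega_{a}(s)$ (using, as you do, that $\notin\mathcal{K}$ forces nonemptiness), it gets $qRp$ by symmetry and then $sRp$ by transitivity, so $p\in\omega_{a}(s)$; the remaining seven indices are never discussed. You instead apply Lemma \ref{l1} twice at the common point $z$ to obtain $\omega_{a}(x)=\omega_{a}(s)$, and supply the missing membership via local reflexivity ($xRz$ and $zRx$ give $xRx$). That is slightly longer for $j=a$, but it is exactly what makes the case $j={<}a{>}$ go through, where the paper's relational manipulation has no direct counterpart; combined with your symmetry collapse (all eight types reduce to the representatives $a$ and $<a>$, i.e.\ item (5) of Theorem \ref{t3}), your write-up actually justifies the quantifier ``for all $j\in\Omega$'', which the paper leaves implicit. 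Your check of the empty-family convention for $\omega_{<a>}$ (so that $z\in\omega_{<a>}(x)$ forces the defining family to be nonempty and hence $x\in\omega_{<a>}(x)$) is precisely the right subtlety to settle there, and it works under the paper's convention that an empty family gives $\omega_{<a>}(x)=\emptyset$, as confirmed by Example \ref{e4}. For item (2) the paper applies Lemma \ref{l1} twice (at a point of $\omega_{j}(s)\cap\omega_{j}(t)$ and again at a point of $\omega_{j}(p)\cap\omega_{j}(s)$) and chains the two resulting displayed equations; you apply it once and then observe that membership in $\mathbb{I}^{\mathcal{K}}_{a}(\cdot)$ depends only on the set $\omega_{a}(\cdot)$, so $\omega_{a}(s)=\omega_{a}(t)$ immediately yields the equality $\mathbb{I}^{\mathcal{K}}_{a}(s)=\mathbb{I}^{\mathcal{K}}_{a}(t)$ --- shorter than the paper's argument and a strictly stronger conclusion than the stated inclusion.
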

\begin{proof}
\begin{enumerate}
\item Suppose $j = a$. Let $p \in \mathbb{I}^{\mathcal{K}}_{j}(s)$. Then, we have $\omega_{a}(p) \cap \omega_{a}(s) \not\in \mathcal{K}$, which implies that $\omega_{a}(p) \cap \omega_{a}(s) \neq \emptyset$. Therefore, there exists an element $q \in \omega_{a}(p) \cap \omega_{a}(s)$, such that, $pRq$ and $sRq$. Given that  $R$ is symmetric and transitive, it follows that $sRp$. Consequently, we can conclude that $p \in \omega_{a}(s)$. Thus, we have established that $\mathbb{I}^{\mathcal{K}}_{j}(s) \subseteq \omega_{j}(s)$.

\item Now, suppose $s \in \mathbb{I}^{\mathcal{K}}_{j}(t)$. Then, we have $\omega_{j}(s) \cap \omega_{j}(t)$$\not\in$$\mathcal{K}$, which implies that $\omega_{j}(s) \cap \omega_{j}(t)$$\neq$$\emptyset$. Consequently, there exists an element $q$ such that $q \in \omega_{j}(s)$ and $q \in \omega_{j}(t)$. Since $R$ is symmetric and transitive, and by Lemma \ref{l1},  it follows that $\omega_{j}(q) = \omega_{j}(s) = \omega_{j}(t)$. Thus,
    \begin{equation}  \label{Eq05}
    \omega_{j}(s) \cap \omega_{j}(t) = \omega_{j}(s) = \omega_{j}(t) \not\in \mathcal{K}
    \end{equation}

Next, let $p \in \mathbb{I}^{\mathcal{K}}_{j}(s)$. Then, $\omega_{j}(p) \cap \omega_{j}(s) \not\in \mathcal{K}$, which implies that $\omega_{j}(p) \cap \omega_{j}(s) \neq \emptyset$. Hence, there exists an element $m$ such that $m \in \omega_{j}(s)$ and $m \in \omega_{j}(t)$. Since $R$ is symmetric and transitive, and by Lemma \ref{l1}, it follows that $\omega_{j}(p) = \omega_{j}(s) = \omega_{j}(m)$. Thus,
    \begin{equation}  \label{Eq06}
    \omega_{j}(p) \cap \omega_{j}(s) = \omega_{j}(s)
    \end{equation}
    Therefore, from Equations \ref{Eq05} and \ref{Eq06}, we obtain $\omega_{j}(p) \cap \omega_{j}(t) \not\in \mathcal{K}$, which implies $p \in \mathbb{I}^{\mathcal{K}}_{j}(t)$.
\end{enumerate}
\end{proof}

The converse of Theorem \ref{t4} (1) is not generally valid, as demonstrated by the following example.

\begin{example}\label{e2} Let $R = \{(t, t)\}$ be a symmetric and transitive relation on $U=\{p, q, s, t\}$. If $\mathcal{K} = \{\emptyset, \{t\}\}$, then we find that $\omega_{a}(t)=\{t\}$, $\mathbb{I}^{\mathcal{K}}_{a}(t)=\emptyset$. Hence, $\omega_{j}(t) \nsubseteq \mathbb{I}^{\mathcal{K}}_{j}(t)$.
\end{example}

Table 1 in \cite{Al-shami 2024} introduces kinds of neighborhoods such as $\omega_{j}$-neighborhoods,  $\rho_{j}$-neighborhoods, $\mathbb{I}_{j}$-neighborhoods, and $\mathbb{I}^{\mathcal{K}}_{j}$-neighborhoods, for each $j \in \Omega$. Here, we correct several errors in Table 1, that create ambiguity in the intended meaning. Since it is possible to equal the neighborhood of a point in the universe to the $\empty set$, but it is not equal to  $\{\emptyset\}$.\\


Table 2 in \cite{Al-shami 2024} provides a comparative analysis of the lower and upper operators, along with accuracy measure results for any subset of $U$, with $\{a, b, i, u\}$. According to Remark \ref{emptycondition}, several observations and corrections have been noted based on this table.

\begin{enumerate}
\item Since $R_{\star}^{\mathbb{I}_{a}^{\mathcal{K}}}(\emptyset)=\{q, s, t\}$, it follows that $R^{\star\mathbb{I}_{a}^{\mathcal{K}}}(\emptyset)=\emptyset$. Consequently, $ACC_{R}^{\star\mathbb{I}_{a}^{\mathcal{K}}}(\emptyset)$ is not zero; rather, it is an indefinite quantity. By the same manner, $ACC_{R}^{\star\mathbb{I}_{b}^{\mathcal{K}}}(\emptyset)$, $ACC_{R}^{\star\mathbb{I}_{i}^{\mathcal{K}}}(\emptyset)$ and $ACC_{R}^{\star\mathbb{I}_{u}^{\mathcal{K}}}(\{p\})$ should be indefinite quantities.
\item Since $R_{\star}^{\mathbb{I}_{u}^{\mathcal{K}}}(\emptyset)= R^{\star\mathbb{I}_{u}^{\mathcal{K}}}(\emptyset)=\emptyset $. Therefore, $ACC_{R}^{\star\mathbb{I}_{u}^{\mathcal{K}}}(\emptyset)$ should be 1, not zero.
\item $R_{\star}^{\mathbb{I}_{b}^{\mathcal{K}}}(\emptyset)=\{q\}$, this set is not equal to $\emptyset$. Thus, $ACC_{R}^{\star\mathbb{I}_{b}^{\mathcal{K}}}(\emptyset)$ is also an indefinite quantity.
\item $ACC_{R}^{\star\mathbb{I}_{i}^{\mathcal{K}}}(\emptyset) = 1$.
\item $R_{\star}^{\mathbb{I}_{u}^{\mathcal{K}}}(\{p\})=\{q\}$, this set is not equal to $\emptyset$.
 \color{black}
\end{enumerate}
\color{black}
Item (3) of Proposition 4.3 in \cite{Al-shami 2024}, which asserts that $R^{\star\mathbb{I}_{a}^{\mathcal{K}}}(U)\supseteq U$, is incorrect. As illustrated in Table 2 of \cite{Al-shami 2024}, we find that $R^{\star\mathbb{I}_{a}^{\mathcal{K}}}(U)=\{p\}\nsupseteq U$. Several observations and corrections have been recorded based on this table.\\

Remark 4.4 in \cite{Al-shami 2024} further claims that the inclusion relations in Proposition 4.3 (3) are generally proper. However, this assertion is also inaccurate, as  $R^{\star\mathbb{I}_{a}^{\mathcal{K}}}(U)\subseteq U$ does not hold in this context.\color{black}\\

According to Remark \ref{emptycondition}, Table 3 in \cite{Al-shami 2024} provides a comparison between results obtained from the proposed Definition 4.1 in \cite{Al-shami 2024} and those from Definition 2.7 in \cite{Al-shami 2021} for the case $j = a$. This comparison reveals several observations and necessary corrections, as outlined below:
\begin{enumerate}
\item Given that $R_{\star}^{\mathbb{I}_{a}^{\mathcal{K}}}(\emptyset)=\{q, s, t\}$, $R^{\star\mathbb{I}_{a}^{\mathcal{K}}}(\emptyset)=\emptyset$, it follows that $ACC_{R}^{\star\mathbb{I}_{a}^{\mathcal{K}}}(\emptyset)$ is not zero but an indefinite quantity.
\item $ACC_{R}^{\star\mathbb{I}_{a}}(\emptyset)$ should be 1, not zero.\\
\end{enumerate}

According to Remark \ref{emptycondition}, Table 4 in \cite{Al-shami 2024} offers a comparative analysis of results derived from the proposed Definition 4.1 from \cite{Al-shami 2024}, alongside those of Definition 2 from \cite{Yao 1998}, Definition 3 from \cite{Allam2005}  and Definitions 4.3, 4.5 in \cite{Atef 2020} for the case $j = b$. Several omissions and errors were identified in this comparison, as outlined below.
\begin{enumerate}
\item $R_{\star}^{\mathbb{I}_{b}^{\mathcal{K}}}(\emptyset)=\{q\}$, which is not equal to $\emptyset$. Therefore, $\emptyset$ is rough which implies $ACC_{R}^{\star\mathbb{I}_{b}^{\mathcal{K}}}(\emptyset)$ is undefined.
\item $R_{\star}^{w_{b}}(\emptyset)=\emptyset$, not $\{q\}$. Thus, $R_{\star}^{w_{b}}(\emptyset)= R^{\star\omega_b}(\emptyset)=\emptyset$ which means that $\emptyset$ is exact and hence $ACC_{R}^{\star w_{b}}(\emptyset) = 1$.
\item $R_{\star}^{w_{b}}(\{p\})=\{p, s\}$, differing from $\{p, q, s\}$.
\item $R_{\star}^{w_{b}}(\{q\})=\emptyset$, not $\{q\}$. Additionally, $ACC_{R}^{\star w_{b}}(\{q\})= 0$.
\item $R_{\star}^{w_{b}}(\{s\})=\emptyset$, differing from $\{q\}$.
\item $R_{\star}^{w_{b}}(\{p, q\})=\{p, s\}$, not equal to $U$. Also, $ACC_{R}^{\star\omega_{b}}(\{p, q\})=\frac{1}{4}$.
\item $R_{\star}^{w_{b}}(\{p, s\})=\{p, s\}$, differing from $\{p, q, s\}$.
\item $R_{\star}^{w_{b}}(\{q, s\})=\emptyset$, it does not equal to $\{q\}$.
Also, $ACC_{R}^{\star w_{b}}(\{q, s\})= 0$.
 \item $R_{\star}^{w_{b}}(\{p, q, s\})=\{p, s\}$, differing from $U$. Also, $ACC_{R}^{\star w_{b}}(\{p, q, s\})= \frac{1}{2}$.
 \item $R_{\star}^{\rho_{b}}(\emptyset)=\emptyset$, differing from $\{t\}$. Therefore, $R_{\star}^{\rho_{b}}(\emptyset) = R^{\star \rho_{b}}(\emptyset) = \emptyset$ which implies $\emptyset$ represents an exact set and hence $ACC_{R}^{\star \rho_{b}}(\emptyset) = 1$.
 \item $R^{\star \rho_{b}}(U)=U$, not $\{p, q, s\}$.
 \item  $R_{\star}^{\rho_{b}}(\{p\})=\emptyset$, not $\{t\}$, and $R^{\star \rho_{b}}(\{p\})=\{p, s\}$, differing from $\{p, q\}$.
 \item  $R_{\star}^{\rho_{b}}(\{q\})=\{q\}$, not $\{q, t\}$.
  \item  $R_{\star}^{\rho_{b}}(\{s\})=\emptyset$, it does not equal to $\{t\}$.
  \item $R^{\star \rho_{b}}(\{t\})=\{t\}$, not $\emptyset$.
 \item  $R_{\star}^{\rho_{b}}(\{p, q\})=\{q\}$, not $\{q, t\}$.
\item  $R_{\star}^{\rho_{b}}(\{p, s\})=\{p, s\}$, differing from $\{p, s, t\}$.
\item  $R_{\star}^{\rho_{b}}(\{p, t\})=\{t\}$, not $\{p, s, t\}$, and $R^{\star \rho_{b}}(\{p, t\})=\{p, s, t\}$, differing from $\{p, s\}$. In addition, $ACC_{R}^{\star \rho_{b}}(\{p, t\})= \frac{1}{3}$.
 \item  $R_{\star}^{\rho_{b}}(\{q, s\})=\{q\}$, it does not equal to $\{q, t\}$.
 \item $R^{\star \rho_{b}}(\{q, t\})=\{q, t\}$, it does not equal to $\{q\}$.
  \item $R^{\star \rho_{b}}(\{s, t\})=\{p, s, t\}$, not $\{p, s\}$.
 \item  $R_{\star}^{\rho_{b}}(\{p, q, s\})=\{p, q, s\}$, differing from $U$.
\item $R^{\star \rho_{b}}(\{p, q, t\})=U$, not $\{p, q\}$. Also, $ACC_{R}^{\star \rho_{b}}(\{p, q, t\})= \frac{1}{2}$.
\item  $R_{\star}^{\rho_{b}}(\{p, s, t\})=\{p, s, t\}$, not $\{p, s\}$, and $R^{\star \rho_{b}}(\{p, s, t\})=\{p, s, t\}$, differing from $\{p, s\}$. In addition, $ACC_{R}^{\star \rho_{b}}(\{p, s, t\})= 1$.
 \item $R^{\star \rho_{b}}(\{q, s, t\})=U$, differing from  $\{p, q\}$.
 \color{black}
\end{enumerate}

\begin{remark} In the proof of Theorem 5.1, the symbol $\mathcal{I}$ should be modified into the symbol $\mathcal{K}$.
\end{remark}

The items (5) and (8) of Proposition 5.4 in \cite{Al-shami 2024} do not hold in general, as demonstrated by the following example.

\begin{example}\label{e3} Let $R = \{(p, p), (p, s), (p, t), (q, t), (q, q), (s, s), (t, q), (t, t)\}$ be a reflexive relation on $U=\{p, q, s, t\}$.

    If $\mathcal{K} = \{\emptyset, \{s\}, \{t\}, \{s, t\}\}$, then $\tau^{{\mathbb{I}_{<a>}^{\mathcal{K}}}} =  2^{U}$ ($2^{U}$ is a power set of $U$) and $\tau^{{\mathbb{I}_{a}^{\mathcal{K}}}} = \{\{p\}, \{q\}, \{s\}, \{p, q\}, \{p, \\
     s\}, \{q, s\}, \{q, t\}, \{p, q, s\}, \{p, q, t\}, \{q, s, t\}, \emptyset, U\}$.
\color{black}
So, $\tau^{{\mathbb{I}_{<a>}^{\mathcal{K}}}}\nsubseteq \tau^{{\mathbb{I}_{a}^{\mathcal{K}}}}$.
\end{example}

The proposition below provides a correction to item (5) of Proposition 5.4 in \cite{Al-shami 2024}:

\begin{proposition} \label{P3.1} Let $(U, R, \mathcal{K})$ be an $\mathbb{I}$-$G$ approximation space and $s\in U$. If $R$ is reflexive, then $\tau^{{\mathbb{I}_{j}^{\mathcal{K}}}}\subseteq\tau^{{\mathbb{I}_{<j>}^{\mathcal{K}}}}$, for each $j \in \{a, b, i, u\}$.

\end{proposition}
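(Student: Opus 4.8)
The plan is to unwind the definition of the two topologies from Theorem \ref{t103} and reduce the inclusion $\tau^{\mathbb{I}_j^{\mathcal{K}}}\subseteq\tau^{\mathbb{I}_{<j>}^{\mathcal{K}}}$ to a pointwise membership statement about the ideal $\mathcal{K}$. Concretely, I would fix $j\in\{a,b,i,u\}$, take an arbitrary $F\in\tau^{\mathbb{I}_j^{\mathcal{K}}}$, and verify the defining condition of $\tau^{\mathbb{I}_{<j>}^{\mathcal{K}}}$, namely that $\mathbb{I}^{\mathcal{K}}_{<j>}(s)\setminus F\in\mathcal{K}$ for every $s\in F$. By hypothesis, $F\in\tau^{\mathbb{I}_j^{\mathcal{K}}}$ already supplies $\mathbb{I}^{\mathcal{K}}_{j}(s)\setminus F\in\mathcal{K}$ for every $s\in F$, so the whole argument hinges on comparing the two neighborhoods at each point of $F$.

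The key step is the inclusion $\mathbb{I}^{\mathcal{K}}_{<j>}(s)\subseteq\mathbb{I}^{\mathcal{K}}_{j}(s)$, valid for each $j\in\{a,b,i,u\}$ whenever $R$ is reflexive; this is precisely Theorem \ref{t104} (equivalently, item (2) of the corrected Theorem \ref{t3}). Subtracting $F$ preserves this inclusion, giving $\mathbb{I}^{\mathcal{K}}_{<j>}(s)\setminus F\subseteq\mathbb{I}^{\mathcal{K}}_{j}(s)\setminus F$. Since $\mathcal{K}$ is an ideal it is hereditary, containing every subset of each of its members (Definition \ref{d7}); hence from $\mathbb{I}^{\mathcal{K}}_{j}(s)\setminus F\in\mathcal{K}$ I may conclude $\mathbb{I}^{\mathcal{K}}_{<j>}(s)\setminus F\in\mathcal{K}$. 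As $s\in F$ was arbitrary, this shows $F\in\tau^{\mathbb{I}_{<j>}^{\mathcal{K}}}$, and therefore $\tau^{\mathbb{I}_j^{\mathcal{K}}}\subseteq\tau^{\mathbb{I}_{<j>}^{\mathcal{K}}}$.

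There is no serious obstacle here: the content is carried entirely by the pre-established neighborhood inclusion together with the hereditary property of the ideal. The one point that warrants care is the direction of monotonicity---shrinking a neighborhood shrinks the set difference, which makes the ideal-membership condition easier to satisfy---so the larger neighborhoods $\mathbb{I}^{\mathcal{K}}_{j}(s)$ give the coarser topology $\tau^{\mathbb{I}_j^{\mathcal{K}}}$ and the smaller neighborhoods $\mathbb{I}^{\mathcal{K}}_{<j>}(s)$ the finer one; this is exactly the inclusion claimed, and Example \ref{e3} confirms that the reverse inclusion genuinely fails, so reflexivity cannot be leveraged to obtain equality here. Finally, I would note that strengthening the hypothesis to a preorder forces $\mathbb{I}^{\mathcal{K}}_{<j>}(s)=\mathbb{I}^{\mathcal{K}}_{j}(s)$ by item (3) of the corrected Theorem \ref{t3}, so the identical reasoning then yields equality of the two topologies.
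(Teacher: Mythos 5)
Your proposal is correct and follows essentially the same route as the paper's own proof: take $F$ in $\tau^{\mathbb{I}_j^{\mathcal{K}}}$, invoke Theorem \ref{t104} to get $\mathbb{I}^{\mathcal{K}}_{<j>}(s)\subseteq\mathbb{I}^{\mathcal{K}}_{j}(s)$ under reflexivity, and use the hereditary property of the ideal $\mathcal{K}$ to conclude $\mathbb{I}^{\mathcal{K}}_{<j>}(s)\setminus F\in\mathcal{K}$. Your write-up is in fact slightly more careful than the paper's, since you make explicit the monotonicity of set difference and the heredity of $\mathcal{K}$, both of which the paper uses silently.
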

\begin{proof}
Let $F \in \tau^{{\mathbb{I}_{j}^{\mathcal{K}}}}$ and $s\in F$, then ${\mathbb{I}_{j}^{\mathcal{K}}}(s) \setminus F \in \mathcal{K}$. Since $R$ is a reflexive relation on $U$, then from Theorem \ref{t104}, $\mathbb{I}^{\mathcal{K}}_{<j>}(s) \subseteq \mathbb{I}^{\mathcal{K}}_{j}(s)$. Therefore, $\mathbb{I}^{\mathcal{K}}_{<j>}(s) \setminus F \in \mathcal{K}$ i.e $F \in \tau^{{\mathbb{I}_{<j>}^{\mathcal{K}}}}$.
\end{proof}

\begin{remark} If $R$ is reflexive, then  $\rho_{j}(x) \nsubseteq \mathbb{I}^{\mathcal{K}}_{j}(x)$, for all $j \in \Omega$, and for every $x\in U$. \color{black} Consequently, $\tau^{{\rho_{j}^{\mathcal{K}}}} \nsubseteq \tau^{{\mathbb{I}_{j}^{\mathcal{K}}}}$, $\forall j \in\Omega$.\\
\end{remark}

According to Remark \ref{emptycondition}, Table 5 in \cite{Al-shami 2024} presents a comparison of the boundary region and accuracy measure results for a set $F$ based on Definition 5.7, where $j \in \{a, b, i, u\}$. The quantities $ACC_{R}^{\tau^{\mathbb{I}^{\mathcal{K}}_{a}}}(\emptyset)$ (and similarly $ACC_{R}^{\tau^{\mathbb{I}^{\mathcal{K}}_{b}}}(\emptyset), ACC_{R}^{\tau^{\mathbb{I}^{\mathcal{K}}_{i}}}(\emptyset), ACC_{R}^{\tau^{\mathbb{I}^{\mathcal{K}}_{u}}}(\emptyset)$) should be 1, not zero. Besides, $j=r, j=l$ have been revised  to  $j=a, j=b$ respectively.
\color{black} \\

Table 6 in \cite{Al-shami 2024} compares the boundary region and accuracy measure results for a set $F$ based on the proposed Definition 5.7 and Definition 2.13 from \cite{Al-shami 2021} for $j = a$. The symbol $\underline{\tau^{\mathbb{I}_{b}}}(L)$ (and similarly $\overline{\tau^{\mathbb{I}_{b}}}(L), BND^{\tau^{\mathbb{I}_{b}}}(L), ACC^{\tau^{\mathbb{I}_{b}}}(L)$) should be corrected to $\underline{\tau^{\mathbb{I}_{a}}}(L)$ (and similarly $\overline{\tau^{\mathbb{I}_{a}}}(L), BND^{\tau^{\mathbb{I}_{a}}}(L), \\
ACC^{\tau^{\mathbb{I}_{a}}}(L)$). In addition, the quantities $ACC_{R}^{\tau^{\mathbb{I}^{\mathcal{K}}_{a}}}(\emptyset)$, and $ACC_{R}^{\tau^{\mathbb{I}_{a}}}(\emptyset)$ are indefinite. \color{black}\\

\section{Conclusion and Discussion}

This paper offers a thorough examination of the work by Al-shami and Hosny \cite{Al-shami 2024}, identifying and rectifying key errors in their definitions, results, and examples concerning $\mathbb{I}^{\mathcal{K}}_{j}$-neighborhoods and their application within rough set theory. Through detailed analysis, counterexamples, and revised proofs, we have identified inaccuracies in previous research and proposed essential corrections.
These corrections not only contribute to theoretical advancements but also reinforce the broader applicability of neighborhood systems in rough set theory. One of the significant contributions of this work is the identification and correction of erroneous examples and results, which were central to the claims made by Al-shami and Hosny. By providing corrected results and examples, we have enhanced the reliability and validity of neighborhood-based approximations within the framework of rough set theory. Furthermore, the refined concepts and methodologies introduced in this paper offer new avenues for future research, equipping scholars with more robust tools to address uncertainty and perform data analysis across various domains, including medical applications, as demonstrated by the improved examples. \\

\textbf{The key objectives achieved in this paper are as follows:}
\begin{enumerate}
\item Several key errors have been identified in Theorem 3.4, Proposition 4.3, and Proposition 5.4 from \cite{Al-shami 2024}. These inaccuracies have been demonstrated through various examples, including Examples \ref{e1}, \ref{e18}, \ref{e4}, \ref{e5}, \ref{e2} and \ref{e3}. Furthermore, corrections and clarifications have been provided for a number of incorrect results, such as the revised proof of Theorem 3.4 from
    \cite{Al-shami 2024}, which is accurately presented in Theorems \ref{t3} and \ref{t4} of this paper.
\item Corrected versions of the erroneous results in \cite{Al-shami 2024} have been provided, including Theorem \ref{t3} and Proposition \ref{P3.1}.
\item Errors within the concepts, definitions, examples, and tables presented in \cite{Al-shami 2024} were identified and corrected. This includes revisions to Definitions \ref{d1}, \ref{d2}, \ref{d111}, and \ref{d13}, as well as significant modifications to five tables and the resolution of 25 distinct errors across several examples.
\item New results and properties have been proposed and demonstrated, including Remark \ref{rem3-2} and Lemma \ref{l1}.
\end{enumerate}

These findings emphasize the importance of critically evaluating newly proposed methodologies and results in dynamic fields such as rough set theory. The corrections provided in this work ensure that the theoretical underpinnings of neighborhood systems remain strong and reliable for future research and practical applications. Moreover, the enhanced understanding of neighborhood-based rough approximations broadens their utility in decision-making, data analysis, and knowledge discovery across various disciplines.\\
${}$\\
\textbf{Acknowledgments} \\
The authors wish to express their sincere appreciation to the Editor-in-Chief, the Section Editor, and the reviewers for their thoughtful feedback and constructive suggestions, which were instrumental in refining and enhancing the quality of this work. \\
${}$\\
\textbf{Use of AI tools declaration} \\
The authors declare they have not used Artificial Intelligence (AI) tools in the creation of this article. \\
${}$\\
\textbf{Availability of data and material} Not applicable.\newline
${}$\\
\textbf{\ Competing interests} The authors declare that they have no competing interests.\newline
${}$\\
\textbf{Funding} There is no funding available.\\
${}$\\

 \end{document}